\newtheorem{theorem}{Theorem}[section]
\newtheorem{proposition}[theorem]{Proposition}
\newtheorem{corollary}[theorem]{Corollary}
\newtheorem{lemma}[theorem]{Lemma}
\theoremstyle{definition}
\newtheorem{remark}[theorem]{Remark}
\newtheorem{example}[theorem]{Example}
\def\suchthat{\, \mid \,} 
\def\Spec{\text{Spec}}
\def\Max{\text{Max}}
\def\Rad{\text{Rad}}
\def\Aut{\text{Aut}}
\def\Reg{\text{Reg}}
\def\tq{\text{tq}}
\def\qf{\text{qf}}
\def\m{\textfrak{m}}
\def\q{\textfrak{q}}
\def\p{\textfrak{p}}
\def\n{\textfrak{n}}
\begin{document}

\title[Invariant Ring Extensions]{Ring Extensions Invariant Under\\ Group Action}

\author{Amy Schmidt}

\address{Department of Mathematics, George Mason University, Fairfax, Virginia
22030-4444, {\tt E-mail: aschmid9@masonlive.gmu.edu}}

\begin{abstract}
Let $G$ be a subgroup of the automorphism group of a commutative ring with identity $T$. Let $R$ be a subring of $T$ such that $R$ is invariant under the action by $G$. We show $R^G\subset T^G$ is a minimal ring extension whenever $R\subset T$ is a minimal extension under various assumptions. Of the two types of minimal ring extensions, integral and integrally closed, both of these properties are passed from $R\subset T$ to $R^G\subset T^G$. An integrally closed minimal ring extension is a flat epimorphic extension as well as a normal pair. We show each of these properties also pass from $R\subset T$ to $R^G\subseteq T^G$ under certain group action.
\end{abstract}

\keywords{Fixed ring, ring of invariants, invariant theory, locally finite, minimal ring extension, flat epimorphism, normal pair}

\subjclass[2010]{Primary 13A50, 13B202 Secondary 13B21, 13A15.}

\maketitle
\begin{center}
\today
\end{center}

\section{Introduction}
\label{intro}

All rings herein are commutative with identity, and all homomorphisms and subrings are unital. For a ring $R$, we denote by $\Reg(R)$ the set of regular elements; $\Spec(R)$ the set of prime ideals; $\Max(R)$ the set of maximal ideals; $\Rad_R(I)$ the radical in $R$ of an ideal $I\subset R$; $\tq(R)$ the total quotient ring; $\qf(R)$ the quotient field, if $R$ is a domain; and $\Aut(R)$ the automorphism group of $R$. As in \cite{Kaplansky}, we refer to the lying-over, going-up, and incomparable properties of ring extensions as LO, GU, and INC, respectively.

Given a subgroup $G$ of $\Aut(R)$, we say $G$ acts on $R$ and denote the fixed ring of this action by $R^G=\{r\in R\suchthat \sigma (r)=r \text{ for all } \sigma\in G\}$. We say a property of $R$ is \textit{($G$-)invariant} if $R^G$ also has the property. Our purpose in this paper is to enhance the popular investigation of which ring-theoretic properties are invariant. As the title of this paper suggests, we determine properties of the ring extension $R\subseteq T$ that are $G$-invariant, meaning the property descends to the fixed subring extension $R^G\subseteq T^G$. 

\textbf{Our riding assumptions in this work are $R$ is a subring of $T$, $G$ acts on $T$ via automorphisms, and $R$ is $G$-invariant, i.e., $\sigma(R)\subseteq R$ for all $\sigma\in G$.} It then follows that $G$ is a subgroup of $\Aut(R)$. 

We denote the orbit of $t\in T$ under $G$ by $\mathcal{O}_t$, i.e., $\mathcal{O}_t=\{\sigma(t)\suchthat \sigma\in G\}$, and we define
\[
n_t:=|\mathcal{O}_t|,\quad\hat t:=\sum_{t_i\in\mathcal{O}_t}t_i\quad\text{and}\quad \tilde t:=\prod_{t_i\in\mathcal{O}_t}t_i.
\]
If $G$ is finite, instead we denote by $\hat t$ the sum $\sum_{\sigma\in G}\sigma(t)$. We say $G$ is \textit{locally finite} if $\mathcal{O}_T$ is finite for all $t\in T$. Given an ideal $I\subset T$ we denote the orbit of $I$ under $G$ by $\mathcal{O}_I:=\{\sigma(I)\suchthat \sigma\in G\}$. By the First Isomorphism Theorem, $T/I\cong T/\sigma(I)$. Clearly, $T/I$ is a field (domain) if and only if $T/\sigma(I)$ is a field (domain). Hence, $I$ is a maximal (prime) ideal if and only if $\sigma(I)$ is a maximal (prime) ideal. We say $G$ is \textit{strongly locally finite} if $G$ is locally finite and $\mathcal{O}_P$ is finite for all $P\in\Spec(T)$.

As in \cite{FO}, we say $R\subset T$ is a \textit{minimal ring extension} if there is no ring $S$ such that $R\subset S \subset T$. Clearly, this is true if and only if $T=R[u]$ for all $u\in T\backslash R$. Since $R\subseteq \bar R \subseteq T$, where $\bar R$ is the integral closure of $R$ in $T$, if $R\subset T$ is minimal, then either $R$ is integrally closed in $T$, or $T$ is integral over $R$ (equivalently, $T$ is module finite over $R$). In the first case we call $R\subset T$ an \textit{integrally closed minimal ring extension}, and in the second case, we call it an \textit{integral minimal ring extension}. By \cite[Th\'eor\`eme~2.2]{FO}, if $R\subset T$ is a minimal ring extension, there exists a unique maximal ideal $M$ of $R$ such that $R_P\cong T_P$ for all $P\in\Spec(T)\backslash\{M\}$. This maximal ideal is commonly referred to as the \textit{crucial maximal ideal} of the extension. In the integral case, $(R:_RT)$ is the crucial maximal ideal, while in the integrally closed case, $(R:_RT)$ is a prime ideal adjacent to the crucial maximal ideal.

In 1970, Ferrand and Olivier contributed to the groundbreaking work of classifying minimal ring extensions by determining the minimal ring extensions of a field \cite{FO}. More recently, Ayache extended this work to integrally closed domains \cite{Ayache}. Shortly thereafter, Dobbs and Shapiro generalized these results further to arbitrary domains in \cite{DS2006} and then later to certain rings with zero-divisors in \cite{DSMRE}. In their second paper, they completely classify the integral minimal ring extensions of an arbitrary ring, as well as the integrally closed minimal ring extensions of a ring with von Neumann regular total quotient ring  \cite{DSMRE}. In \cite{Picavets} (cf.\ \cite{DMP}), Picavet and Picavet-L'Hermitte give another characterization of integral minimal ring extensions. In \cite{CahenDobbsLucas}, Cahen et al.\ characterize integrally closed minimal ring extensions of an arbitrary ring.

In Section~\ref{integral section}, under the assumption $R\subset T$ is an integral minimal ring extension and $G$ is locally finite acting on $T$ (such that $R$ is $G$-invariant), we show $R^G\subset T^G$ is an integral minimal ring extension under mild hypotheses. To do so we use \cite[Theorem~3.3]{Picavets}, given in Theorem~\ref{integral mre} for reference. We present examples to show it is necessary to assume $R^G\neq T^G$. In one example, we use the idealization construction. Given a ring $R$ and an $R$-module $M$, the \textit{idealization} $R(+)M=\{(r,m)\suchthat r\in R,\,m\in M\}$ is ring with multiplication given by $(r,m)(r',m')=(rr',rm'+r'm)$ and componentwise addition. By \cite[Theorem~2.4]{DobbsMRE}, $R(+)M$ is a minimal ring extension of $R$ if and only if $M$ is a simple $R$-module.

In Section~\ref{integrally closed section}, we turn to the integrally closed case. In Theorem~\ref{integrally closed mre invariance}, we show arbitrary integrally closed minimal ring extensions are $G$-invariant assuming $G$ is locally finite. This invariance is established in \cite[Theorem~3.6]{DS2007Houston} under the assumptions that the base ring is a domain in which $|G|\in\mathbb{N}$ is a unit. The authors use the characterization of the minimal overrings of an integrally closed domain (that is not a field) by Ayache \cite[Theorem~2.4]{Ayache}. This result is generalized by Dobbs and Shapiro \cite[Theorem~3.7]{DSMRE}, and then further generalized by Cahen et al.\ \cite[Theorem~3.5]{CahenDobbsLucas}. The latter authors introduce a new classification of integrally closed minimal ring extensions of an arbitrary ring in terms of rank 1 valuation pairs. 

For an extension $R\subset T$ and a prime ideal $P\subset R$, we say $(R,P)$ is a \textit{valuation pair of $T$} if there exists a valuation $v$ on $T$ such that $R=\{t\in T\suchthat v(t)\geq 0\}$ and $P=\{t\in T\suchthat v(t)>0\}$ as in \cite{Manis} (cf.\ \cite{CahenDobbsLucas}). Equivalently, $(R,P)$ is a valuation pair of $T$ if $R=S$ whenever $S$ is an intermediate ring containing a prime ideal lying over $P$. The \textit{rank} of $(R,P)$ is the rank of the valuation group. A useful necessary and sufficient condition for $(R,P)$ to have rank 1 is that $P$ is a critical ideal \cite[Lemma~2.12]{CahenDobbsLucas}. Cahen et al.\ define a \textit{critical ideal (for $R\subset T$)} as an ideal $I\subset R$ such that $I=\Rad_R((R:_Rt))$ for all $t\in T\backslash R$. That is, $\Rad_R((R:_Rt))$ is the same ideal for all $t\in T\backslash R$. While such an ideal may not exist for some extensions, if it does, clearly it is unique.

In Section 4, we show certain ring extensions related to minimal ring extensions are also invariant. It is easy to see the integral and integrally closed properties are invariant. Other related extensions are flat epimorphic extensions and normal pairs. As in \cite{Davis}, for an extension $R\subset T$, we say $(R,T)$ is a \textit{normal pair} if every intermediate ring is integrally closed in $T$. Clearly integrally closed minimal ring extensions are normal pairs; they are also flat epimorphic extensions \cite[Th\'eorm\`e~2.2]{FO}. (Throughout, we mean epimorphic in the category of commutative rings.) In Proposition~\ref{perfect localization}, we show that flat epimorphic extensions are invariant under strongly locally finite group action. Lastly, in Coroallary~\ref{normal pair invariance}, we assert normal pairs are invariant.

\section{Integral Minimal Ring Extensions}
\label{integral section}

We begin with a well-known result that is fundamental in this work and in much of the work by Dobbs and Shapiro \cite{DS2006Houston}, \cite{DS2007Houston}, \cite{DS2007}. These papers on invariant theory are a strong influence on our work.

\begin{lemma}
\label{integrality}
If $G$ is locally finite, then $T$ is integral over $T^G$.
\end{lemma}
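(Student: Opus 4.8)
The plan is to show that every element $t \in T$ satisfies a monic polynomial over $T^G$, and the natural candidate is the polynomial whose roots are the orbit elements of $t$. Since $G$ is locally finite, the orbit $\mathcal{O}_t = \{t_1, \dots, t_{n_t}\}$ is finite, so I would form
\[
f(X) := \prod_{i=1}^{n_t}(X - t_i) \in T[X].
\]
Clearly $f$ is monic of degree $n_t$ and $f(t) = 0$ since $t$ itself lies in its orbit (take $\sigma$ to be the identity). So the only real content is to check that the coefficients of $f$ lie in $T^G$.

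The key step is the invariance of the coefficients. Each $\sigma \in G$ acts on $T[X]$ by acting on coefficients and fixing $X$; applying $\sigma$ to $f$ permutes the factors $X - t_i$, because $\sigma$ permutes the orbit $\mathcal{O}_t$ (it is a bijection of the finite set $\mathcal{O}_t$ onto itself). Hence $\sigma(f) = f$ as polynomials, which means $\sigma$ fixes every coefficient of $f$; that is, the coefficients lie in $T^G$. Equivalently, the coefficients are, up to sign, the elementary symmetric functions of the orbit, and these are manifestly $G$-invariant. Therefore $f(X) \in T^G[X]$ is a monic polynomial with $f(t) = 0$, so $t$ is integral over $T^G$. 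Since $t$ was arbitrary, $T$ is integral over $T^G$.

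I do not anticipate a serious obstacle here; the argument is the standard one showing a ring is integral over its ring of invariants under a locally finite group. The one point that merits a word of care is that local finiteness is exactly what guarantees the product defining $f$ is finite, so that $f$ is an honest polynomial of finite degree — without it the construction breaks down. It is also worth noting explicitly that $G$ need not be finite for this to work: only the orbit of the single element $t$ needs to be finite, which is precisely the hypothesis. No use is made of $R$ at all in this lemma; it is purely a statement about $T^G \subseteq T$.
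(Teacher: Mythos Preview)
Your proof is correct and is the standard argument: form the monic polynomial whose roots are the (finite) orbit of $t$, observe that $G$ permutes the factors so the coefficients are fixed, and conclude $t$ is integral over $T^G$. The paper itself does not supply a proof of this lemma; it simply records it as a well-known result and moves on, so there is nothing to compare against beyond noting that what you have written is exactly the classical justification one would expect.
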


Recall we our riding assumptions in this paper are $R\subset T$, $G$ acts on $T$ and $R$ is $G$-invariant. In the following lemma we establish several technical results needed for the main result of this section. Proposition~\ref{fixed quotient} is another tool for the main result and is also of independent interest.

\begin{lemma} 
\label{conductor lemma}
Assume $G$ is locally finite, and assume $M:=(R:_RT)$ is a maximal ideal of $R$. Set $\m:=M\cap R^G =M\cap T^G$. Then:
\begin{enumerate}[(a)]
\item \label{contraction} The conductor $(R^G:_{R^G}T^G)$ is $\m$.
\item \label{orbit} The orbit of $M$ in $R$ is a singleton set, i.e., $\mathcal{O}_{M}=\{M\}$.
\item \label{lying over M} If there exist $N\in\Spec(T)$ containing $M$, then $M=N\cap R$.
\end{enumerate}
\end{lemma}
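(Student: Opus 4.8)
The plan is to prove the three parts in sequence, using part~\eqref{orbit} as a stepping stone. We are given that $G$ is locally finite and $M=(R:_RT)$ is a maximal ideal of $R$; note also that since $T$ is integral over $T^G$ by Lemma~\ref{integrality} and $R$ is $G$-invariant with $MT=M\subseteq R$, the setup is compatible with $M\cap R^G = M\cap T^G$ (the second equality holds because an element of $M\cap T^G$ lies in $R$ as $M\subseteq R$, so it lies in $R^G$).

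For part~\eqref{orbit}, I would argue that $M$ is the \emph{unique} maximal ideal of $R$ containing the conductor $(R:_RT)=M$, which is trivially true, but more is needed: I want $\sigma(M)=M$ for every $\sigma\in G$. The key observation is that the conductor ideal is canonical: for any $\sigma\in G$, since $\sigma$ restricts to a ring automorphism of $R$ and maps $T$ onto $T$ (here I use that $G$ acts on $T$ and $R$ is $G$-invariant, hence $\sigma(R)\subseteq R$ and applying $\sigma^{-1}$ gives $\sigma(R)=R$), we get $\sigma\big((R:_RT)\big) = (\sigma(R):_{\sigma(R)}\sigma(T)) = (R:_RT)$. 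Thus $\sigma(M)=M$ for all $\sigma\in G$, so $\mathcal{O}_M=\{M\}$. I expect this to be the cleanest part.

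For part~\eqref{lying over M}: suppose $N\in\Spec(T)$ contains $M$. Then $N\cap R$ is a prime ideal of $R$ containing $M=(R:_RT)$. Since $M$ is a \emph{maximal} ideal of $R$, any prime of $R$ containing $M$ equals $M$, so $N\cap R = M$. (This part is essentially immediate once we know $M$ is maximal; the content is just that $N\supseteq M$ forces $N\cap R\supseteq M$.)

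For part~\eqref{contraction}, the conductor $(R^G:_{R^G}T^G)$: the main obstacle is showing this equals $\m = M\cap R^G$ rather than something larger or smaller. One inclusion is not hard: if $x\in R^G$ satisfies $xT^G\subseteq R^G$, I want $x\in M$, i.e.\ $xT\subseteq R$; here the difficulty is passing from control over $T^G$ to control over all of $T$. I would use local finiteness: for $t\in T$ with orbit $\{t_1,\dots,t_n\}$, the elementary symmetric functions $e_k(t_1,\dots,t_n)$ lie in $T^G$, and $t$ satisfies the monic polynomial $\prod_i(X-t_i)$ over $T^G$; combining this integral dependence with $xT^G\subseteq R^G\subseteq R$ and the fact that $M$ is the conductor, I can try to show $x\tilde t$ and the relevant products land in $R$, then bootstrap to $xt\in R$. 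For the reverse inclusion $\m\subseteq (R^G:_{R^G}T^G)$: take $x\in M\cap R^G$; then $xT\subseteq R$ since $x\in M=(R:_RT)$, and in particular $xT^G\subseteq R\cap T^G$. The point to nail down is $R\cap T^G = R^G$, which holds because $R\subseteq T$ and $T^G$ is exactly the $G$-fixed elements, so $R\cap T^G = R^G$. Hence $xT^G\subseteq R^G$, giving $x\in(R^G:_{R^G}T^G)$. I expect the forward inclusion in~\eqref{contraction} — promoting conductor membership from $T^G$ to $T$ via the integral-dependence trick — to be the main technical hurdle, and I would lean on the hypothesis that $M$ is maximal (so $R/M$ is a field) together with Lemma~\ref{integrality} to close the gap, possibly showing directly that any $x\notin M$ with $x\in R^G$ generates $R$ modulo $M$ and hence cannot conduct $T^G$ into $R^G$ unless $T^G=R^G$-adically controlled, i.e.\ arguing by the maximality of $M$ that the conductor upstairs contracts exactly to the conductor downstairs.
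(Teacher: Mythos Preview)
Your arguments for parts~\eqref{orbit} and~\eqref{lying over M} are correct and essentially coincide with the paper's (your~\eqref{orbit} is in fact a touch cleaner: you observe directly that $\sigma\big((R:_RT)\big)=(R:_RT)$ because $\sigma(R)=R$ and $\sigma(T)=T$, whereas the paper shows $\sigma(M)T\subseteq R$ to get $\sigma(M)\subseteq M$ and then invokes maximality of $\sigma(M)$).

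For part~\eqref{contraction} you have the easy inclusion $\m\subseteq(R^G:_{R^G}T^G)$ exactly as in the paper. The gap is in the other inclusion. Your proposed ``integral-dependence trick'' does not give what you need: from $t^n-e_1t^{n-1}+\cdots+(-1)^ne_n=0$ with $e_i\in T^G$ and $xe_i\in R^G\subseteq R$, multiplying through by $x^n$ yields
\[
(xt)^n-(xe_1)(xt)^{n-1}+(x^2e_2)(xt)^{n-2}-\cdots+(-1)^n x^ne_n=0,
\]
which shows only that $xt$ is \emph{integral over} $R$, not that $xt\in R$. Since this lemma is used precisely in the integral (not integrally closed) case, you cannot close that gap by assuming $R$ is integrally closed in $T$.

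The paper's route is the one you half-glimpse at the end but do not commit to: by Lemma~\ref{integrality}, $R$ is integral over $R^G$, so $\m=M\cap R^G$ is a \emph{maximal} ideal of $R^G$. Then from $\m\subseteq(R^G:_{R^G}T^G)$ and the fact that the conductor is a proper ideal (this uses $R^G\neq T^G$, tacitly assumed throughout), equality is immediate. No bootstrapping from $T^G$ to $T$ is required; the single inclusion plus maximality of $\m$ does all the work.
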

\begin{proof}
\begin{inparaenum}[(a)]
\item Let $x\in \m$. Then $x\in R^G$, and $xt\in R$, for all $t\in T$. If $t\in T^G$, then $xt\in T^G$, from which it follows that $xt\in T^G\cap R=R^G$. Hence $x\in(R^G:_{R^G}T^G)$. Thus $\m\subseteq (R^G:_{R^G}T^G)$. By Lemma~\ref{integrality}, $R$ is integral over $R^G$. Hence $\m$ is maximal in $R^G$. Thus $\m=(R^G:_{R^G}T^G)$.

\item Let $\sigma\in G$. Then
\[
\sigma(M)R=\sigma(M)\sigma(R)=\sigma(MR)\subseteq\sigma(R)=R.
\]
Hence $\sigma(M)\subseteq M$. This is sufficient to show $\sigma(M)=M$ in $R$, since $\sigma(M)$ is maximal in $R$, by \cite[Lemma~2.1(b)]{DS2006Houston}.

\item Clearly $M=N\cap R$ whenever $N$ is a prime ideal of $T$ containing $M$, since $M\in\Max(R)$.
\end{inparaenum}
\end{proof}

\begin{proposition}
\label{fixed quotient}
Let $M\in\Max(R)$ and $\m:=M\cap R^G$. Assume $G$ is locally finite such that $\text{char}(R^G/\m)\nmid n_r$ for all $r\in R$. If $\mathcal{O}_M=\{M\}$, then the $G$-action extends to $R/M$ via $\sigma(r+M)=\sigma(r)+M$, for $\sigma\in G$. Moreover $R^G/\m\cong (R/M)^G$.
\end{proposition}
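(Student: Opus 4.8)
The plan is to treat the two assertions separately: the first is essentially formal, and the second reduces to a surjectivity statement proved by averaging over orbits. For the first assertion, note that each $\sigma\in G$ satisfies $\sigma(R)\subseteq R$ and $\sigma^{-1}(R)\subseteq R$, so it restricts to an automorphism of $R$, and $\mathcal{O}_M=\{M\}$ says exactly that $\sigma(M)=M$ for every $\sigma\in G$. Hence the rule $\sigma(r+M):=\sigma(r)+M$ is well defined on $R/M$, is a ring automorphism with inverse induced by $\sigma^{-1}$, and $\sigma\mapsto\bigl(r+M\mapsto\sigma(r)+M\bigr)$ is a group homomorphism $G\to\Aut(R/M)$. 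I would spend only a line or two on this.

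For the isomorphism I would first produce the canonical comparison map. The composite $R^G\hookrightarrow R\twoheadrightarrow R/M$ has kernel $R^G\cap M=\m$, and its image lies in $(R/M)^G$ (since $r\in R^G$ gives $\sigma(r+M)=\sigma(r)+M=r+M$), so it descends to an injective ring homomorphism $\varphi\colon R^G/\m\hookrightarrow(R/M)^G$. It is worth recording at this point that $\m$ is maximal in $R^G$: applying Lemma~\ref{integrality} to the locally finite action of $G$ on $R$ shows that $R$ is integral over $R^G$, hence $R/M$ is integral over the image of $R^G/\m$; as $R/M$ is a field, so is $R^G/\m$, and in particular $\text{char}(R/M)=\text{char}(R^G/\m)$.

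The substance of the proposition is the surjectivity of $\varphi$, and this is where the hypothesis $\text{char}(R^G/\m)\nmid n_r$ enters. Given $\lambda\in(R/M)^G$, write $\lambda=r+M$ with $r\in R$; being $G$-fixed means $\sigma(r)-r\in M$ for all $\sigma\in G$, so every element of the finite orbit $\mathcal{O}_r$ is congruent to $r$ modulo $M$. Therefore the orbit sum $\hat r=\sum_{r_i\in\mathcal{O}_r}r_i$, which lies in $R\cap T^G=R^G$ because $G$ permutes $\mathcal{O}_r\subseteq R$, satisfies $\hat r\equiv n_r\,r\pmod M$. In the field $R/M$ the element $n_r\cdot 1$ is nonzero, since $\text{char}(R/M)=\text{char}(R^G/\m)\nmid n_r$, so it is a unit of the subfield $R^G/\m$; consequently $(n_r\cdot 1)^{-1}(\hat r+\m)\in R^G/\m$ has image $(n_r\cdot 1)^{-1}(\hat r+M)=r+M=\lambda$ under $\varphi$. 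Hence $\varphi$ is onto, so $R^G/\m\cong(R/M)^G$.

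The only genuine obstacle is this last step. Because $G$ is merely locally finite, one cannot sum over the whole group, so the averaging must be done over the finite orbit $\mathcal{O}_r$; the point of the characteristic hypothesis is precisely that it renders the resulting factor $n_r$ invertible, and the reason one wants $R^G/\m$ to be a field (not just a subring of $R/M$) is so that $n_r$ can be inverted \emph{inside} $R^G/\m$ — which is exactly what the integrality of $R$ over $R^G$ supplies.
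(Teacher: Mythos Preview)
Your proof is correct and follows essentially the same route as the paper's: define the natural map $R^G/\m\to(R/M)^G$, check injectivity from $\ker=\m$, and obtain surjectivity by averaging over the finite orbit $\mathcal{O}_r$ to write $\hat r\equiv n_r r\pmod M$ and then invert $n_r$ in the field $R^G/\m$. Your write-up is in fact slightly more careful than the paper's in justifying that $\hat r\in R^G$ and that $\text{char}(R/M)=\text{char}(R^G/\m)$, but the argument is the same.
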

\begin{proof}
The given action of $G$ on $R/M$ is well-defined: If $r+M=s+M$, then $\sigma(r)-\sigma(s)\in\sigma(M)= M$. Hence $\sigma(r)+M=\sigma(s)+M$.

As for the moreover, first note $\m\in\Max(R^G)$, by Lemma~\ref{integrality}. Define $\phi:R^G/\m\rightarrow (R/M)^G$ by $r+\m\mapsto r+M$. Clearly, $\phi$ is a ring homomorphism. If $\phi(r+\m)=0+M$, then $r\in M$. It follows that $r\in M\cap R^G=m$, so $r+\m=0+\m$. Hence $\phi$ is injective. 

Now let $r+M\in (R/M)^G$. Then $r+M=\sigma(r)+M$ for all $\sigma\in G$. Summing $\mathcal{O}_r$ we have $n_rr+M=\hat r+M$. Since $R/M$ is a field, we have $r+M=(n_r+M)^{-1}(\hat r+M)$. Similarly, since $n_r+\m\in R^G/\m$, we have $y+\m:=(n_r+\m)^{-1}\in R^G/\m$. It follows that $y+M=(n_r+M)^{-1}$, whence $\phi(y\hat r+\m)=y\hat r+M=(n_R+M)^{-1}(\hat r+M)=r+M$. Thus $\phi$ is surjective. Hence $R^G/m\cong (R/M)^G$.
\end{proof}

The technique of averaging the orbit of an element used above to produce $r+M=(n_r+M)^{-1}(\hat r+M)$ is introduced in \cite{Bergman}. We generalize this method in the following lemma.

\begin{lemma}
\label{fixed sums}
Assume $G$ is locally finite. Let $t\in T^G$. We show that if $t=r_1u_1+r_2u_2+\cdots+r_ku_k$ for some $r_i\in R$ and $u_i\in T^G$, then there exist $m,m_i\in\mathbb{N}$ and $r_i'\in R^G$ such that $0\neq mt=m_1r_1'u_1+m_2r_2'u_2+\cdots+m_kr_k'u_k$ whenever
\begin{enumerate}[(a)]
\item $T$ is a domain and $\text{char}(T)\nmid n_t$ for all $t\in T$, or
\item $|G|$ is finite and a unit in $T$.
\end{enumerate}
\end{lemma}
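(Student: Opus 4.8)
The plan is to prove both parts by a uniform strategy: exhibit an explicit element of $R^G$ that, up to a nonzero integer multiple, can replace each $r_i$ while keeping the coefficients $u_i\in T^G$ fixed, then sum over the orbit of the whole expression to clear away the non-invariant part. Concretely, for each index $i$ I would form the orbit sum of $r_i$ — call it $s_i:=\widehat{r_i}$ — which lies in $R^G$ since $R$ is $G$-invariant and the sum ranges over the orbit (resp.\ over $G$ in the finite case). The key identity is that applying $\sigma\in G$ to $t=\sum r_iu_i$ and using $\sigma(u_i)=u_i$ gives $t=\sum\sigma(r_i)u_i$; summing these equations over the orbit of the tuple (or over $G$) yields $Nt=\sum s_i u_i$ for an appropriate positive integer $N$, where $s_i\in R^G$. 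The integer $N$ is the size of the group orbit one sums over: in case (b) it is $|G|$, and in case (a) it is the least common multiple (or product) of the $n_{r_i}$, arranged so that each $r_i$ contributes its full orbit.

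First I would handle case (b), which is the cleaner one: set $N=|G|$, apply each $\sigma\in G$ to the relation $t=\sum r_iu_i$, sum over $\sigma\in G$, and obtain $|G|\,t=\sum_i\bigl(\sum_{\sigma\in G}\sigma(r_i)\bigr)u_i=\sum_i\widehat{r_i}\,u_i$. Here $\widehat{r_i}\in R^G$ by the averaging argument already used in Proposition~\ref{fixed quotient}, and since $|G|$ is a unit in $T$ we have $|G|\,t\neq 0$ provided $t\neq 0$; if $t=0$ the statement is vacuous (take all $r_i'=0$), so we may assume $t\neq 0$. This gives the conclusion with $m=m_i=|G|$ and $r_i'=\widehat{r_i}/|G|$ — or, to keep $r_i'\in R^G$ literally without dividing, absorb the constant into $m_i$ and write $|G|\,t=\sum_i 1\cdot\widehat{r_i}\cdot u_i$ with $m_i=1$, $m=|G|$, $r_i'=\widehat{r_i}$.

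For case (a), the subtlety is that $G$ need not be finite, so I cannot sum over all of $G$; instead I sum over orbits. The natural device is to pass to the common orbit: let $n$ be a common multiple of the $n_{r_i}$ (for instance $n=n_{r_1}\cdots n_{r_k}$, or the lcm), and for each $i$ set $r_i':=\frac{n}{n_{r_i}}\widehat{r_i}=\frac{n}{n_{r_i}}\sum_{t\in\mathcal{O}_{r_i}}t$, which is a well-defined element of $R$ fixed by $G$. Summing the equations $t=\sum_i\sigma(r_i)u_i$ as $\sigma$ ranges so that each $\sigma(r_i)$ runs over $\mathcal{O}_{r_i}$ the correct number of times produces $n\,t=\sum_i r_i'u_i$. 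The hypothesis $\operatorname{char}(T)\nmid n_t$ for all $t\in T$ (hence $\operatorname{char}(T)$ does not divide $n$, since $n$ is built from orbit sizes) together with $T$ a domain guarantees $n\cdot 1_T\neq 0$, so $n\,t\neq 0$ when $t\neq 0$; again $t=0$ is vacuous.

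The main obstacle — and the step deserving the most care — is the bookkeeping in case (a): making precise the claim that summing the translated relations yields exactly $n\,t$ on the left and orbit sums on the right, without over- or under-counting, when the $n_{r_i}$ differ across $i$. The clean way is to index the sum not over group elements but over a set of coset representatives realizing each orbit; alternatively, one can first reduce to the case $n_{r_1}=\cdots=n_{r_k}$ by replacing the relation $t=\sum r_iu_i$ with $nt=\sum(\tfrac{n}{n_{r_i}}r_i)u_i$ after noting $\tfrac{n}{n_{r_i}}r_i$ still has orbit size dividing $n$, then sum a single orbit's worth of translates. I would also double-check that $\widehat{r_i}\in R^G$: this is immediate because $\sigma$ permutes $\mathcal{O}_{r_i}$ and $\mathcal{O}_{r_i}\subseteq R$ by $G$-invariance of $R$, so the sum is fixed; this is exactly the argument underlying the ``averaging'' remark following Proposition~\ref{fixed quotient}. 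Everything else is a routine application of the fixed-point identity $\sigma(u_i)=u_i$ and distributivity.
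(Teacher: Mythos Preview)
Your treatment of case (b) is correct and is exactly what the paper does: sum the relation over all of $G$ and observe that each $\sum_{\sigma\in G}\sigma(r_i)\in R^G$.

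The gap is in case (a). Your one-shot plan requires a multiset $S\subseteq G$ of size $n$ (with $n$ the product or lcm of the $n_{r_i}$) such that, for \emph{every} $i$ simultaneously, the multiset $\{\sigma(r_i):\sigma\in S\}$ is a union of full orbits. No such $S$ need exist when the stabilizers $H_i=\mathrm{Stab}(r_i)$ differ: the only canonical choice is a transversal of $H=\bigcap_i H_i$, which gives multiplier $[G:H]$, and the hypothesis $\mathrm{char}(T)\nmid n_t$ for $t\in T$ says nothing about $[G:H]$ (it is the orbit size of a tuple, not of an element of $T$). Your proposed reduction ``replace $t=\sum r_iu_i$ by $nt=\sum(\tfrac{n}{n_{r_i}}r_i)u_i$'' is also not right: algebraically $nt=\sum (nr_i)u_i$, and in any case scaling $r_i$ by an integer does not change its stabilizer, so the orbit sizes remain unequal.

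The paper avoids this by \emph{not} trying to fix all coefficients at once. It proceeds by induction on the number of already-fixed coefficients: given
\[
0\neq t=q_1u_1+\cdots+q_iu_i+r_{i+1}u_{i+1}+\cdots+r_ku_k
\]
with $q_1,\dots,q_i\in R^G$, one sums the translates by $\sigma$ ranging over a transversal $\mathcal{N}_{r_{i+1}}$ of $\mathrm{Stab}(r_{i+1})$ alone. The fixed coefficients $q_j$ are merely multiplied by $n_{r_{i+1}}$, the $(i{+}1)$-st coefficient becomes $\widehat{r_{i+1}}\in R^G$, and the later coefficients become new elements $s_j\in R$ to be handled at subsequent steps. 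Each step multiplies $t$ by a single orbit size $n_s$ for some $s\in R\subseteq T$, so hypothesis (a) applies directly and, $T$ being a domain, the running product stays nonzero. This inductive ``fix one coefficient at a time'' device is precisely the missing idea in your case (a).
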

\begin{proof}
For all $t\in T$, fix a subset $\mathcal{N}_t$ of $G$ such that for each $a\in\mathcal{O}_t$ there exists a unique $\sigma\in\mathcal{N}_t$ with $a=\sigma(t)$ (and so $|\mathcal{N}_t|=|\mathcal{O}_t|=n_t$).

First we show if 
\begin{equation}
\label{equation 1}
0\neq t=q_1u_1\cdots+q_iu_i+r_{i+1}u_{i+1}+\cdots+r_ku_k,
\end{equation}
where $t\in T^G$, $q_i\in R^G$, and $r_j\in R$, then there exists $m\in\mathbb{N}$, $r_{i+1}'\in R^G$, and $s_j\in R$ such that 
\begin{equation}
\label{equation 2}
0\neq mr=m(q_1u_+\cdots+q_iu_i)+r_{i+1}'u_{i+1}+s_{i+2}u_{i+2}+\cdots+s_ku_k.
\end{equation}
Applying each $\sigma\in\mathcal{N}_{r_{i+1}}$ to (\ref{equation 1}) and summing establishes (\ref{equation 2}). In particular,
\[
m=n_{r_{i+1}},\quad r_{i+1}'=\widehat{r}_{i+1},\quad\text{and}\quad s_j=\sum_{\sigma\in\mathcal{N}_{r_{i+1}}}\sigma(r_j)u_j,
\]
for $i+2\leq j\leq k$. Note $n_{r_{i+1}}r\neq 0$ under assumption (a). Since $i=1$ establishes the base case, the assertion of the lemma now follows from induction. Under assumption (b), the same argument holds replacing $\mathcal{N}_{r_{i+1}}$ with $G$ and $n_{r_{i+1}}$ with $|G|$.
\end{proof}

We have established the machinery needed to prove the main result of this section. We use the characterization provided below for reference.

\begin{theorem}\cite[Theorem~3.3]{Picavets} (cf. \cite[Corollary~II.2]{DMP})
\label{integral mre}
Let $R\rightarrow T$ be an injective ring homomorphism, with conductor $(R:_RT)$. Then $R\rightarrow T$ is minimal and finite if and only if $(R:_RT)\in\Max(R)$ and one of the following three conditions holds:
\begin{enumerate}[(a)]
\item\label{inert} \textbf{Inert case:} $(R:_RT)\in\Max(T)$ and $R/(R:_RT)\rightarrow T/(R:_RT)$ is a minimal field extension.
\item\label{decomposed} \textbf{Decomposed case:} There exist $N_1,N_2\in\Max(T)$ such that $(R:_RT)=N_1\cap N_2$ and the natural maps $R/(R:_RT)\rightarrow T/N_1$ and $R/(R:_RT)\rightarrow T/N_2$ are each isomorphisms.
\item\label{ramified} \textbf{Ramified case:} There exists $N\in\Max(T)$ such that $N^2\subseteq(R:_RT) \subset N$, $[T/(R:_RT):R/(R:_RT)]=2$ and the natural map $R/(R:_RT)\rightarrow T/N$ is an isomorphism.
\end{enumerate}
\end{theorem}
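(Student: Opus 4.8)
I would split the biconditional into its two implications, preceded by one reduction. Write $C:=(R:_RT)$. Since $C$ is the largest ideal of $T$ contained in $R$, it is an ideal of $T$ lying inside $R$, so for every intermediate ring $R\subseteq S\subseteq T$ we have $C\subseteq S$ and $C$ is an ideal of $S$; hence $S\mapsto S/C$ is an inclusion-preserving bijection from the intermediate rings of $R\subseteq T$ onto the subrings of $A:=T/C$ containing $k:=R/C$, and the $R$-module structure on $A$ factors through $k$ because $CT\subseteq C$. Consequently, \emph{once} we know $C\in\Max(R)$ (so that $k$ is a field), the map $R\to T$ is minimal and finite if and only if $A\neq k$, $A$ is a finite-dimensional $k$-algebra, and $k\subset A$ is a minimal ring extension; moreover the maximal ideals $N,N_1,N_2$ appearing in (a), (b), (c) will be exactly the preimages in $T$ of the maximal ideals of $A$. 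So the whole theorem reduces to classifying the finite-dimensional $k$-algebras $A\supsetneq k$ for which $k\subset A$ is minimal.

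For $\Rightarrow$: if $R\to T$ is minimal and finite, then $R\subset T$ is an integral minimal ring extension, so by the facts on the crucial maximal ideal recalled in the Introduction, $C=(R:_RT)$ is the crucial maximal ideal; in particular $C\in\Max(R)$ and $k$ is a field. Now $A$ is Artinian, say $A=A_1\times\cdots\times A_m$ with each $A_i$ local Artinian. If $m\geq 2$, then for each $j$ (with $e_j$ the corresponding idempotent) the set $k\cdot e_j+(1-e_j)A$ is a subring lying strictly between $k\cdot 1$ and $A$ whenever $\dim_k A_j\geq 2$, so minimality forces $\dim_k A_j=1$ for all $j$, i.e.\ $A\cong k^m$; and $m\geq 3$ would yield a proper intermediate subalgebra (identify two coordinates), so $m=2$ and $A\cong k\times k$, the decomposed case. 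If $m=1$, then $A=k\cdot 1\oplus\m$ is local with nilpotent maximal ideal $\m$. If $\m=0$, then $A$ is a field, $C\in\Max(T)$, and $k\subset A$ is a minimal field extension: the inert case. If $\m\neq 0$, then $\m^2\subsetneq\m$ by Nakayama, and $k\cdot 1+\m^2$ is a proper intermediate subring unless $\m^2=0$; once $\m^2=0$, $k\cdot 1+\m$ is a proper intermediate subring unless $A/\m=k$, and then $k\cdot 1+V$ is a proper intermediate subring for every $k$-subspace $V$ with $0\subsetneq V\subsetneq\m$, so $\dim_k\m=1$ and $A\cong k[\epsilon]/(\epsilon^2)$: the ramified case. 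In each case, pulling the maximal ideal(s) of $A$ back to $T$ produces the ideals named in the corresponding condition.

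For $\Leftarrow$: assume $C=(R:_RT)\in\Max(R)$ and one of (a), (b), (c). In every case $A=T/C$ is a finite-dimensional $k$-algebra with $A\neq k$: a finite field extension in (a) (a minimal field extension is finite); $A\cong T/N_1\times T/N_2\cong k\times k$ by the Chinese Remainder Theorem in (b) (necessarily $N_1\neq N_2$, else $C$ would be maximal in $T$ and $R=T$); and $[A:k]=2$ in (c). Lifting to $T$ a $k$-basis of $A$ containing $1$ shows $T$ is module-finite over $R$ (using $C\subseteq R$), so $R\to T$ is finite; and by the reduction, $R\to T$ is minimal once $k[a]=A$ for every $a\in A\setminus k$, which holds in (a) since $k[a]=k(a)$ is an intermediate field $\neq k$ hence all of $A$, in (b) since $\{1,a\}$ is a $k$-basis of $k\times k$ whenever the two coordinates of $a$ differ, and in (c) since $\epsilon\in k[a]$ whenever the $\epsilon$-coefficient of $a$ is a unit.

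I expect the main obstacle to be the $\Rightarrow$ direction, namely being genuinely exhaustive in the case analysis on the Artinian structure of $A$ — in particular ruling out a local $A$ whose nilradical is not of square zero or whose residue field properly extends $k$ — and checking that each set one exhibits really is a subring and really lies strictly between $k$ and $A$. The observation that, once $\m^2=0$, any $k$-linear subspace of $A$ containing $1$ and built from part of $\m$ is automatically closed under multiplication is what makes the local case uniform; the rest is bookkeeping with idempotents and $k$-dimension counts. The other potentially delicate point — that $C$ is maximal in $R$ — is not an obstacle here, since it is part of the structure theory recalled in the Introduction.
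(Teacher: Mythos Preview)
The paper does not prove this statement; Theorem~\ref{integral mre} is quoted verbatim from \cite[Theorem~3.3]{Picavets} (cf.\ \cite[Corollary~II.2]{DMP}) purely for reference, to be applied as a black box in the proof of Theorem~\ref{integral mre invariance}. There is therefore no proof in the paper against which to compare your attempt.

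That said, your argument is essentially the standard one---reducing modulo the conductor and then invoking the Ferrand--Olivier classification of minimal ring extensions of a field---and is correct in outline. One small imprecision: in the local case ($m=1$) you open with ``$A=k\cdot 1\oplus\m$,'' which already presupposes $A/\m=k$; but that equality is one of the conclusions you must derive, not a hypothesis. The repair is immediate: your own observation that $k\cdot 1+\m$ is a proper intermediate subring whenever $\m\neq 0$ and $A/\m\neq k$ does not require $\m^2=0$ (since $\m\cdot\m\subseteq\m$ already), so you should invoke it before, or independently of, the $\m^2$ step rather than after it. With that reordering the local analysis is clean, and the rest of your case split (the idempotent bookkeeping for $m\geq 2$, the Nakayama step, and the $k$-subspace argument once $\m^2=0$) is fine.
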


We now present our main result on the invariance of integral minimal extensions.

\begin{theorem}
\label{integral mre invariance}
Let $R\subset T$ be an integral minimal extension with crucial maximal ideal $M=(S:_RR)$. Assume $G$ locally finite such that $R^G\neq T^G$ and $\text{char}(R^G/(M\cap T^G))\nmid n_r$, for all $r\in R$.  Then $R^G\subset T^G$ is a minimal extension of the same type as $R\subset T$. Moreover, the crucial maximal ideal of $R^G\subset T^G$ is $(R^G:_{R^G}T^G)$.
\end{theorem}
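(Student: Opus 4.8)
The plan is to verify, one case at a time, the hypotheses of Theorem~\ref{integral mre} for the inclusion $R^G\hookrightarrow T^G$. By Theorem~\ref{integral mre} applied to $R\subset T$, the conductor $M=(R:_RT)$ is maximal in $R$ and exactly one of the inert, decomposed, and ramified conditions holds. Write $\m:=M\cap R^G$. By Lemma~\ref{conductor lemma}, $\m=M\cap T^G=(R^G:_{R^G}T^G)$, the orbit $\mathcal O_M$ is a singleton, and $N\cap R=M$ for every $N\in\Spec(T)$ lying over $M$; since $R^G\subseteq R$ is integral by Lemma~\ref{integrality} and $M\in\Max(R)$, also $\m\in\Max(R^G)$. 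Thus the first requirement of Theorem~\ref{integral mre} is met, and once one of the three conditions is checked, Theorem~\ref{integral mre} gives that $R^G\to T^G$ is minimal and finite, hence an integral minimal extension, with crucial maximal ideal $(R^G:_{R^G}T^G)=\m$; this proves the ``moreover'' and shows the extension has the same (integral) type as $R\subset T$.

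The engine for all three cases is the identity $T=RT^G$ together with a residue-field computation. Since $T^G\cap R=R^G\ne T^G$, the intermediate ring $R[T^G]$ properly contains $R$, so $R[T^G]=T$ by minimality. Reducing modulo $M$ and writing $k:=R/M$, $L:=T/M$, $\overline{T^G}:=(T^G+M)/M\cong T^G/\m$ and $k^G:=(R/M)^G$, we obtain $L=k\cdot\overline{T^G}$; moreover $G$ acts on $L$ (as $\sigma(M)=M$) with $\overline{T^G}$ contained in its fixed ring, and Proposition~\ref{fixed quotient} (applicable since $\mathcal O_M=\{M\}$ and the characteristic hypothesis holds) identifies $k^G$ with $R^G/\m$, whence $k^G=k\cap\overline{T^G}$. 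By an argument analogous to that of Proposition~\ref{fixed quotient}, $k$ and $\overline{T^G}$ are linearly disjoint over $k^G$: given a $k^G$-basis $\{x_i\}$ of $\overline{T^G}$ and a shortest nontrivial $k$-linear relation among them, normalizing one coefficient to $1$, applying $\sigma\in G$ and subtracting forces the remaining coefficients into $k\cap\overline{T^G}=k^G$, contradicting independence. Hence $[T^G/\m:R^G/\m]=[\overline{T^G}:k^G]=[L:k]=[T/M:R/M]$.

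Now the case analysis. Inert: here $M\in\Max(T)$, so $\m\in\Max(T^G)$ by integrality, and $k\subset L$ is a minimal field extension; I claim $R^G/\m=k^G\subset\overline{T^G}=T^G/\m$ is a minimal field extension. If $k^G\subsetneq F\subseteq\overline{T^G}$ is an intermediate field, then $F\cap k=k^G$, so $F\not\subseteq k$, so $kF$ is a field properly containing $k$, hence $kF=L$; therefore $[L:k]=[kF:k]\le[F:k^G]$, while $F\subseteq\overline{T^G}$ gives $[F:k^G]\le[\overline{T^G}:k^G]=[L:k]$, forcing $F=\overline{T^G}$. Decomposed and ramified: here $[T/M:R/M]=2$, so $T^G/\m$ is a two-dimensional $R^G/\m$-algebra, necessarily of the form $(R^G/\m)[x]/(g)$ with $g$ quadratic. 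In the decomposed case $T^G/\m$ is reduced, being a subring of $T/M\cong k\times k$; so $T^G/\m$ is either a quadratic field extension of $R^G/\m$ or $R^G/\m\times R^G/\m$, and one checks, using the action of $G$ on the pair $\{N_1,N_2\}$ of maximal ideals over $M$, that the former occurs precisely when $G$ interchanges $N_1$ and $N_2$; this yields the inert or the decomposed condition for $R^G\to T^G$, with maximal ideals $N_i\cap T^G$. In the ramified case $T/M$ is local with nilpotent maximal ideal $N/M$; writing a generator of $\overline{T^G}$ over $k^G$ as $\bar s=a+b\bar n_1$ with $\bar n_1$ spanning $N/M$ over $k$, and using that each $\sigma\in G$ fixes $a$ (so $a\in k^G$) and scales $\bar n_1$, one finds $(\bar s-a)^2=0$, so $T^G/\m\cong(R^G/\m)[\epsilon]/(\epsilon^2)$; then $\n:=N\cap T^G$ satisfies $\n^2\subseteq\m\subsetneq\n$ and $T^G/\n\cong R^G/\m$ (using $N\cap R=M$), which is the ramified condition.

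The step I expect to be the main obstacle is the decomposed case: one must pin $T^G/\m$ down to exactly the two possibilities above and check that the induced maximal ideals of $T^G$ behave correctly, which forces the realization that ``same type'' in the statement cannot mean the finer inert/decomposed/ramified trichotomy — a decomposed extension can pass to an inert one after taking invariants — but only integral versus integrally closed. The ramified computation is the next subtlety, since the identity $(\bar s-a)^2=0$ must be argued slightly differently when $\text{char}(R^G/\m)=2$.
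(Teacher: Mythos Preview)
Your approach is correct and genuinely different from the paper's. The paper argues each case separately by directly verifying the conditions of Theorem~\ref{integral mre} for $R^G\subset T^G$: in the inert case it passes to the residue-field extension and uses Lemma~\ref{fixed sums} to rewrite every element of $T^G$ as a polynomial in $u$ with $R^G$-coefficients; in the decomposed and ramified cases it checks the isomorphisms $R^G/\m\to T^G/\n_i$ and the containments $\n^2\subseteq\m\subsetneq\n$ by hand, and in the ramified case bounds $[T^G/\m:R^G/\m]$ above by $2$ via an ad hoc averaging/contradiction argument. Your route is more uniform: the single identity $[T^G/\m:R^G/\m]=[T/M:R/M]$, obtained from $T=R\cdot T^G$ together with linear disjointness of $k=R/M$ and $\overline{T^G}$ over $k^G$, settles the dimension once and for all, after which the three cases reduce to the elementary classification of two-dimensional commutative algebras over the field $R^G/\m$ (or, in the inert case, to a short degree count). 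Your observation that a decomposed $R\subset T$ can yield an inert $R^G\subset T^G$ when some element of $G$ interchanges $N_1$ and $N_2$ is correct --- for instance $R=\mathbb C\hookrightarrow\mathbb C\times\mathbb C=T$ with $\sigma(a,b)=(\bar b,\bar a)$ gives $R^G=\mathbb R\subset\mathbb C\cong T^G$ --- and the paper's proof does not address this possibility: its assertion that ``the $G$-action extends to $T/N_1$'' tacitly assumes $\sigma(N_1)=N_1$, which fails in such examples. Thus ``same type'' must indeed be read as integral versus integrally closed, exactly as the abstract phrases it, and your argument in fact covers a case the paper's decomposed-case argument misses. Finally, your worry about characteristic $2$ in the ramified case is unnecessary: once $a\in k^G$, you have $\bar s-a\in N/M$, and then $(\bar s-a)^2=0$ follows directly from $N^2\subseteq M$, independently of characteristic.
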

\begin{proof}
Throughout the argument, set $\m:=(R^G:_{R^G}T^G)$, whence $\m=M\cap R^G=M\cap T^G$, by Lemma~\ref{conductor lemma}(\ref{contraction}). 

\begin{inparaenum}
\item[\textbf{Inert case:}] By Theorem~\ref{integral mre}(\ref{inert}), $M\in\Max(T)$ and $R/M\rightarrow T/M$ is a minimal field extension. By Lemma~\ref{conductor lemma}(\ref{orbit}) and Proposition~\ref{fixed quotient}, we may pass to $R/M\subset T/M$. Replacing $R/M\subset T/M$ with $R\subset T$, we show $R^G\subset T^G$ is a minimal field extension. Clearly this is true if and only if for all $u\in T^G\backslash R^G$, $T^G=R^G[u]$. If $u\in T^G\backslash R^G$, then $u\in T\backslash R$, so $T=R[u]$. Let $t\in T^G$. Then $t=r_ku^k+\cdots+r_1u+r_0$, for some $k\in\mathbb{N}$ and $r_i\in R$. By Lemma~\ref{fixed sums}, there exist $m,m_i\in\mathbb{N}$ and $r_i'\in R^G$ such that $0\neq mt=m_k r_k'u^k+\cdots+m_1 r_1'u+ m_0r_0'$. Since $R^G$ is a field, we have $t=m^{-1}(m_k r_k'u^k+\cdots+m_1 r_1'u+m_0 r_0')\in R^G[u]$. Hence, $R^G\subset T^G$ is a minimal field extension. By Theorem~\ref{integral mre}(\ref{inert}), the original fixed ring extension (before passing to the quotient ring extension) $R^G\subset T^G$ is an inert integral minimal extension with crucial maximal ideal $\m=(R^G:_{R^G}T^G)$.

\item[\textbf{Decomposed case:}] By Theorem~\ref{integral mre}(\ref{decomposed}), there exist $N_1,N_2\in\Max(T)$ such that $M=N_1\cap N_2$ and the natural maps $R/M\rightarrow T/N_1$ and $R/M\rightarrow T/N_2$ are isomorphisms. Set $\n_1:=N_1\cap T^G$ and $\n_2:=N_2\cap T^G$. Since $T$ is integral over $T^G$, $\n_1,\n_2\in\Max(T^G)$. Clearly
\[
\m=M\cap T^G=(N_1\cap N_2)\cap T^G=\n_1\cap \n_2. 
\]

Define $\phi:R^G/\m\rightarrow T^G/\n_1$ via the natural map $r+\m\mapsto r+\n_1$. Suppose $\phi(r+\m)=0+\n_1$ for some $r\in R^G$. Then $r\in \n_1\cap R^G$, but, by Lemma~\ref{conductor lemma}(\ref{lying over M}), $\n_1\cap R^G=\m$. Hence, $r+\m=0+\m$. Thus, $\phi$ is injective.

To show $\phi$ is surjective, we first note the $G$-action extends to $T/N_1$, since it extends to $R/M$ and $R/M\cong T/N_1$. From Lemma~\ref{conductor lemma}(\ref{orbit}) and Proposition~\ref{fixed quotient}, we have $R^G/\m\cong(R/M)^G\cong(T/N_1)^G$. Let $t+\n_1\in T^G/\n_1$ be nonzero. Then $ t+N_1\in (T/N_1)^G$ is nonzero. (Clearly it is fixed, and if $t\in N_1$, then $t\in N_1\cap T^G=\n_1$ -- contradiction.) Since $R^G/m\cong(T/N_1)^G$ (via composition of the natural maps), there exists $r+\m\in R^G/\m$ such that $r+\m \mapsto r+M\mapsto r+N_1=t+N_1$. It follows $(r-t)\in N_1\cap T^G=\n_1$. Hence $\phi(r+\m)=r+\n_1=t+\n_1$. Thus $\phi$ is surjective, so $R^G/\m\cong T^G/\n_1$. The same argument applies to show $R^G/\m\cong T^G/\n_2$. By Theorem~\ref{integral mre}(\ref{decomposed}), $R^G\subset T^G$ is a decomposed integral minimal extension with crucial maximal ideal $\m=(R^G:_{R^G}T^G)$.

\item[\textbf{Ramified case:}] By Theorem~\ref{integral mre}(\ref{ramified}), there exists $N\in\Max(T)$ such that $N^2\subseteq M \subset N$, $[T/M:R/M]=2$ and the natural map $R/M\rightarrow T/N$ is an isomorphism. Set $\n:=N\cap T^G$, and recall $\m=M\cap T^G$. Clearly, $\n\in\Max(T^G)$ and $\m\subsetneq \n$, since $\m\notin\Max(T^G)$ (since $M\notin\Max(T)$, $N\in\Max(T)$, and $T$ is integral over $T^G$). For the other containment, let $x\in \n^2$. Then $x\in N^2$, so $x\in M$. Hence, $x\in M\cap T^G=\m$. Thus, $\n^2\subseteq \m$.

We show that the natural map $\phi: R^G/\m\rightarrow T^G/\n$ given by $r+\m\mapsto r+\n$ is an isomorphism. Suppose $\phi(r+\m)=0+\n$ for some $r\in R^G$. Then $r\in \n$, so $r^2\in \n^2$. Since $\n^2\subseteq \m$ and $\m$ is prime (maximal) in $R^G$, we have $r\in \m$. (Alternatively, $r\in \n\cap R^G=\m$, by Lemma~\ref{conductor lemma}(\ref{lying over M}).) Hence, $r+\m=0+\m$. Thus, $\phi$ is injective.

Next we show $\phi$ is surjective. Let $t+\n\in T^G/\n$. Then $t+N\in (T/N)^G$. Note that, as in the decomposed case, since $R/M\cong T/N$, the $G$-action extends to $T/N$. From this, Lemma~\ref{conductor lemma}(\ref{orbit}), and Proposition~\ref{fixed quotient}, it follows that $R^G/\m \cong(R/M)^G\cong(T/N)^G$ via $r+\m\mapsto r+M\mapsto r+N$. Hence, there exists $r+\m\in R^G/\m$ such that $r+\m\mapsto r+M\mapsto r+N=t+N$, from which it follows that $(r-t)\in N\cap T^G=\n$. Hence, $\phi(r+\m)=t+\n$. Thus, $\phi$ is surjective.

It remains to show $[T^G/\m:R^G/\m]=2$. Note $T^G/\m$ is not a domain, since $\n^2\subseteq \m\subset n$ implies $\m=\n$, if $\m$ is prime. Hence $T^G/\m\neq R^G/\m$, i.e., $[T^G/\m:R^G/\m]\geq2$.

Suppose $[T^G/\m:R^G/\m]>2$, and let $\{e_1+\m, e_2+\m,e_3+\m\}$ be an $R^G/\m$-linearly independent set in $T^G/\m$. Then each $e_i\notin M$; otherwise, $e_i\in M\cap T^G=\m$. Hence each $e_i+M$ is nonzero in $T/M$. Since $[T/M:R/M]=2$, without loss of generality we may assume there exist $t_1+M,t_2+M\in T/M$ such that 
\[
e_3+M=(t_1+M)(e_1+M)+(t_2+M)(e_2+M)=t_1e_1+t_2e_2+M.
\]
As in Lemma~\ref{fixed sums}, using $\sigma\in \mathcal{N}_{t_1}$ and summing $\mathcal{O}_{t_1}$ we have
\[
n_{t_1}e_3+M=\widehat{t}_1e_1+\left(\sum_{\sigma\in\mathcal{N}_{t_1}}\sigma(t_2)\right)e_2+M.
\]
Defining $t_3$ to be the coefficient of $e_2$ above and repeating the above technique with respect to $t_3$ we have
\[
n_{t_3}n_{t_1}e_3+M=n_{t_3}\widehat{t}_1e_1+\widehat{t}_3e_2+M.
\]
It follows that $n_{t_3}n_{t_1}e_3-(n_{t_3}\widehat{t}_1e_1+\widehat{t}_3e_2)\in M\cap T^G=\m$, so
\[
n_{t_3}n_{t_1}e_3+\m=n_{t_3}\widehat{t}_1e_1+\widehat{t}_3e_2+\m.
\]
Equivalently,
\[
(n_{t_3}n_{t_1}+\m)(e_3+\m)=(n_{t_3}\widehat{t}_1+\m)(e_1+\m)+(\widehat{t}_3+\m)(e_2+\m)
\]
is an $R^G/\m$-linear combination of $e_1+\m,e_2+\m, e_3+\m$ in $T^G/\m$ -- contradiction. Hence, there cannot exist in $T^G/\m$ any more than two $R^G/\m$-linearly independent elements. Thus $[T^G/\m:R^G/\m]\leq 2$. Hence $[T^G/\m:R^G/\m]=2$. By Theorem~\ref{integral mre}(\ref{ramified}), $R^G\subset T^G$ is a ramified integral minimal extension with crucial maximal ideal $\m=(R^G:_{R^G}T^G)$.
\end{inparaenum}
\end{proof}

\begin{remark}
It is necessary to assume $R^G\neq T^G$ in Theorem~\ref{integral mre invariance}, as illustrated in the following.
\end{remark}

\begin{example} The fixed rings are equal, even under finite group action, in the following cases:

\begin{inparaenum}
\item[\textbf{Inert case:}] Set $R:=\mathbb{R}$, $T:=\mathbb{C}$, and $G:=\{1,\sigma\}$, where $\sigma$ is the conjugacy map. Then $R^G=R=T^G$.

\item[\textbf{Decomposed case:}] Let $F$ be a field such that $\text{char}(F)\neq 2$, and set $R:=\{(x,x)\suchthat x\in F\}$ and $T:=F\times F$. By \cite[Lemme~1.2(b)]{FO}, $R\subset T$ is a minimal extension. Define $G:=\{1,\sigma\}$, where $\sigma((x,x)=(x,-x)$. Then $R^G=T^G$. 

\item[\textbf{Ramified case:}] Let $F$ and $R$ be as above, and set $T:=F(+)F$. Then by \cite[Lemme~1.2(c)]{FO}, $R\subset T$ is a minimal extension. Define $G$ as above. Then $R^G=T^G$.
\end{inparaenum}
\end{example}

\section{Integrally Closed Minimal Extension}
\label{integrally closed  section}

In this section, we show that the integrally closed minimal property of the extension $R\subset T$ is invariant under locally finite $G$-action. This generalizes Dobbs' and Shapiro's result  that the property is invariant if $R$ is a domain and if $|G|$ is finite and a unit in $R$ \cite[Theorem~3.6]{DS2007Houston}. They use Ayache's characterization of minimal extensions (overrings) of an integrally closed domain \cite[Theorem~2.4]{Ayache}. Ayache's result has since been generalized by Dobbs and Shapiro \cite[Theorem~3.7]{DSMRE} and recently further generalized by Cahen et al.\ \cite[Theorem~3.5]{CahenDobbsLucas}. In the latter, the authors give several necessary and sufficient conditions for an arbitrary ring extension to be integrally closed  and minimal, which we use to establish Theorem~\ref{integrally closed mre invariance}.

Whereas crucial maximal ideals are historically essential to the study of minimal extensions, Cahen et al.\ introduce critical ideals and use them extensively in characterizing integrally closed minimal extensions of an arbitrary ring \cite{CahenDobbsLucas}. As previously mentioned, they define a critical ideal for $R\subset T$ as an ideal $I\subset R$ such that $I=\Rad_R((R:_Rt))$ for all $t\in T\backslash R$. That is, $\Rad_R((R:_Rt))$ is the same ideal for all $t\in T\backslash R$. They show in \cite[Lemma~2.11]{CahenDobbsLucas} that if an extension has a critical ideal, then the ideal is prime. Moreover, they show that if $R\subset T$ is a minimal extension, then the critical ideal exists \cite[Proposition~2.14(2)]{CahenDobbsLucas} and is maximal \cite[Theorem~3.5]{CahenDobbsLucas}. If $R\subset T$ has a critical ideal, we show $R^G\subset T^G$ has a critical ideal under any $G$-action such that $R^G\neq T^G$.

\begin{lemma}
\label{critical ideal invariance}
Let $P$ be the critical ideal of $R\subset T$. If $R^G\neq T^G$, then $\p:=P\cap R^G$ is the critical ideal of $R^G\subset T^G$.
\end{lemma}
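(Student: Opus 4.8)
The plan is to show that $\p := P \cap R^G$ satisfies $\p = \Rad_{R^G}((R^G :_{R^G} u))$ for every $u \in T^G \setminus R^G$, which is precisely the definition of a critical ideal. I would first fix an arbitrary $u \in T^G \setminus R^G$; note that such $u$ exists because $R^G \neq T^G$, and that $u \in T \setminus R$ as well, so $\Rad_R((R:_R u)) = P$ by hypothesis. The two containments will be handled separately.

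For the inclusion $\Rad_{R^G}((R^G :_{R^G} u)) \subseteq \p$, I would take $x \in R^G$ with $x^n u \in R^G$ for some $n$. Then $x^n u \in R$, so $x \in \Rad_R((R:_R u)) = P$, and since $x \in R^G$ this gives $x \in P \cap R^G = \p$. This direction is essentially formal. For the reverse inclusion $\p \subseteq \Rad_{R^G}((R^G :_{R^G} u))$, I would take $x \in \p = P \cap R^G$. Then $x \in P = \Rad_R((R:_R u))$, so there is some $n$ with $x^n u \in R$. The issue is that $x^n u$ need not lie in $R^G$ — it lies in $R$, but $u$ being fixed only tells us $x^n u \in T^G$, hence $x^n u \in R \cap T^G = R^G$. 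That is actually immediate: $x \in R^G$ and $u \in T^G$ force $x^n u \in T^G$, and combined with $x^n u \in R$ we get $x^n u \in T^G \cap R = R^G$. So in fact $x \in \Rad_{R^G}((R^G:_{R^G} u))$, and both inclusions hold.

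Having shown $\Rad_{R^G}((R^G:_{R^G} u)) = \p$ for every $u \in T^G \setminus R^G$, I conclude $\p$ is the critical ideal of $R^G \subset T^G$, and by \cite[Lemma~2.11]{CahenDobbsLucas} it is automatically prime (though this is not needed for the statement). I should double-check one subtlety: the argument nowhere uses local finiteness of $G$, which matches the lemma's hypotheses (only $R^G \neq T^G$ is assumed). The main — and really the only — obstacle is the bookkeeping observation that $R \cap T^G = R^G$, which holds because $R^G = R \cap T^G$ by definition of the fixed ring (an element of $R$ fixed by all of $G$ is exactly an element of $R^G$); once that is in hand, both containments are one-line manipulations of radicals of conductors. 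So I expect this proof to be short, with the only care needed being to verify that the element witnessing membership in the radical can be taken to land in the fixed subring.
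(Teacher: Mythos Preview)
Your proof is correct and follows essentially the same approach as the paper. The paper compresses the two inclusions into a single chain of equalities, $\p=\Rad_R((R:_Rt))\cap R^G=\Rad_{R^G}((R:_Rt)\cap R^G)=\Rad_{R^G}((R^G:_{R^G}t))$, but the content is identical to your argument, resting on the same key observation that $R\cap T^G=R^G$.
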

\begin{proof}
Let $t\in T^G\backslash R^G$. Then $t\in T\backslash R$. Hence $P=\Rad_R((R:_Rt))$, from which it follows that
\[
\p=\Rad_R((R:_Rt))\cap R^G=\Rad_{R^G}((R:_Rt)\cap R^G)=\Rad_{R^G}((R^G:_{R^G}t)).
\]
Thus $\p$ is the critical ideal of $R^G\subset T^G$.
\end{proof}

We next show if a critical ideal is maximal, then its orbit (under $G$) is a singleton set.

\begin{lemma}
\label{critical ideal orbit}
Suppose $M=\Rad_R((R:_Rt))$, for all $t\in T\backslash R$, i.e., $M$ is the critical ideal for $R\subset T$. If $M$ is maximal, then $\sigma(M)=M$ for all $\sigma\in G$, i.e. $\mathcal{O}_M=\{M\}$.
\end{lemma}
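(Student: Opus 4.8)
The plan is to mimic the argument used for the conductor in Lemma~\ref{conductor lemma}(\ref{orbit}), exploiting the fact that the critical ideal is described intrinsically in terms of the extension data $(R:_Rt)$, which the action of $G$ permutes in a controlled way. First I would fix $\sigma\in G$ and aim to show $\sigma(M)\subseteq M$; since $M$ is assumed maximal and $\sigma(M)$ is again a maximal ideal of $R$ (because $\sigma$ restricts to an automorphism of $R$, by our riding assumptions), the reverse inclusion and hence equality will follow automatically, exactly as in the cited lemma.

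To get $\sigma(M)\subseteq M$, the key observation is the compatibility of the conductor with the group action: for any $t\in T$ one has $\sigma\big((R:_Rt)\big)=(R:_R\sigma(t))$, using that $\sigma$ is an automorphism of $T$ carrying $R$ onto $R$. Consequently $\sigma\big(\Rad_R((R:_Rt))\big)=\Rad_R\big(\sigma((R:_Rt))\big)=\Rad_R\big((R:_R\sigma(t))\big)$. Now pick any $u\in T\backslash R$; since $R$ is $G$-invariant and $\sigma$ is injective, $\sigma(u)\in T\backslash R$ as well, so $\sigma(u)$ is again a legitimate ``test element'' for the critical ideal. Applying the defining property of $M$ to both $u$ and $\sigma(u)$ gives
\[
\sigma(M)=\sigma\big(\Rad_R((R:_Ru))\big)=\Rad_R\big((R:_R\sigma(u))\big)=M.
\]
So in fact $\sigma(M)=M$ directly, without even needing the maximality hypothesis for the inclusion step — though I would still note that maximality guarantees $M$ is a genuine (prime) ideal to which the computation applies cleanly, and it is the hypothesis under which the statement is invoked later.

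The only point requiring a little care — and the place I would expect a referee to look closely — is the identity $\sigma\big(\Rad_R(I)\big)=\Rad_R\big(\sigma(I)\big)$ for an ideal $I\subseteq R$ and the identity $\sigma\big((R:_Rt)\big)=(R:_R\sigma(t))$; both are immediate from $\sigma$ being a ring automorphism of $R$ (respectively of $T$) fixing $1$, but they are the load-bearing facts. Once these are in hand the proof is a one-line substitution, so I would present it compactly: fix $\sigma\in G$, fix $u\in T\backslash R$, observe $\sigma(u)\in T\backslash R$, and chase the equalities above to conclude $\mathcal{O}_M=\{M\}$.
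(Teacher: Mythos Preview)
Your proof is correct and rests on the same underlying idea as the paper's: the defining description $M=\Rad_R((R:_Rt))$ is $G$-equivariant because $\sigma$ carries $T\backslash R$ to itself and commutes with taking conductors and radicals. The execution differs slightly. The paper fixes $x\in M$ and verifies element-by-element that $\sigma^{-1}(x)\in\Rad_R((R:_R\sigma^{-1}(t)))=M$, establishing only the inclusion $M\subseteq\sigma(M)$, and then invokes maximality of $M$ to upgrade this to equality; the author then remarks separately that maximality is not actually needed. Your argument is cleaner: by packaging the compatibilities $\sigma((R:_Rt))=(R:_R\sigma(t))$ and $\sigma(\Rad_R(I))=\Rad_R(\sigma(I))$ as identities rather than inclusions, you obtain $\sigma(M)=M$ in one stroke and make the maximality hypothesis visibly superfluous. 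So the two proofs are the same in spirit, but yours is the more streamlined presentation and directly yields what the paper relegates to a post-hoc remark.
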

\begin{proof}
Let $\sigma\in G$ and $t\in T\backslash R$. Note $\sigma^{-1}(t)\in T\backslash R$; otherwise, if $\sigma^{-1}(t)\in R$, then $t=\sigma(\sigma^{-1}(t))\in\sigma(R)=R$ -- contradiction. Since $M$ is the critical ideal for $R\subset T$, $M=\Rad_R((R:_R\sigma^{-1}(t)))$. Let $r$ be an arbitrary element of $R$, let $x\in M$, and set $y:=\sigma^{-1}(x)$. Then there exists $n\in\mathbb{N}$ such that $x^nr\in R$, from which it follows that $(\sigma^{-1}(x))^n\sigma^{-1}(t)\in \sigma^{-1}(R)=R$. Hence $y=\sigma^{-1}(x)\in\Rad_R((R:_R\sigma^{-1}(t)))=M$.  Thus $x=\sigma(y)\in\sigma(M)$, which shows $M\subseteq\sigma(M)$. Since $M$ is maximal, $M=\sigma(M)$, as desired.
\end{proof}

\begin{remark}
It is not necessary to assume $M$ is maximal in the preceding lemma. A similar set-theoretic argument establishes the converse $\sigma(M)\subseteq M$.
\end{remark}

Related to critical ideals are valuation pairs for an extension $R\subset T$. As in the introduction and \cite{Manis}, for $P\in\Spec(R)$, $(R,P)$ is a valuation pair of $T$ if there is a valuation $v$ on $T$ with $R=\{t\in T\suchthat v(t)\geq 0\}$ and $P=\{t\in T\suchthat v(t)>0\}$. Equivalently, $(R,P)$ is a valuation pair of $T$ if $R=S$ whenever $S$ is an intermediate ring containing a prime ideal lying over $P$ \cite{Manis}. Rank 1 valuation pairs are one of several equivalences of integrally closed minimal extensions given by Cahen et al \cite{CahenDobbsLucas}. As previously mentioned, the rank of a valuation pair $(R,P)$ of $T$ is the rank of the valuation group. The following lemma describes the relationship between critical ideals and valuation pairs.

\begin{lemma}\cite[Lemma~2.12]{CahenDobbsLucas}
\label{critical ideals and valuation pairs}
Let $(R,P)$ be a valuation pair of $T$. Then $R\subset T$ has a critical ideal if an only if $(R,P)$ has rank 1. Moreover, under these conditions, $P$ is the critical ideal of $R\subset T$.
\end{lemma}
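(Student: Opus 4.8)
The plan is to work directly with a Manis valuation $v\colon T\to\Gamma\cup\{\infty\}$ realizing the pair, normalized so that $v$ is surjective and $\Gamma$ is its value group, so that $R=\{t\in T\suchthat v(t)\geq 0\}$ and $P=\{t\in T\suchthat v(t)>0\}$, and so that the rank of $(R,P)$ is the rank of $\Gamma$ (equivalently, the number of its proper convex subgroups). The first step I would record is the valuative description of the conductors: for $t\in T\backslash R$ we have $v(t)<0$ and $(R:_Rt)=\{r\in R\suchthat v(r)\geq -v(t)\}$, hence
\[
\Rad_R((R:_Rt))=\{r\in R\suchthat v(r)>0\ \text{and}\ nv(r)\geq -v(t)\text{ for some }n\geq 1\}.
\]
The restriction $v(r)>0$ (rather than $v(r)\geq 0$) is automatic: if $v(r)=0$ then $v(r^nt)=v(t)<0$ for all $n$, so $r$ never enters the conductor. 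After this, both implications become order-theoretic statements about $\Gamma$.

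For the implication ``rank $1\Rightarrow$ critical ideal exists and equals $P$'': if $(R,P)$ has rank $1$, then $\Gamma$ is a nontrivial archimedean ordered group, so for every $r$ with $v(r)>0$ and every $t$ with $v(t)<0$ there is an $n$ with $nv(r)\geq -v(t)$. Feeding this into the displayed formula gives $\Rad_R((R:_Rt))=P$ for every $t\in T\backslash R$, so $P$ is a critical ideal of $R\subset T$; this also establishes the ``moreover'' (and uniqueness of the critical ideal was already noted in the introduction).

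For the converse, ``critical ideal exists $\Rightarrow$ rank $1$'', I would argue the contrapositive. Assume $\Gamma$ has rank $\geq 2$ (the case $\Gamma=0$, i.e. $R=T$, is excluded since $R\subset T$), and pick a convex subgroup $H$ with $0\subsetneq H\subsetneq\Gamma$, an element $\alpha\in H$ with $\alpha>0$, and $\beta\in\Gamma$ with $\beta>0$, $\beta\notin H$; convexity of $H$ forces $n\alpha<\beta$ for all $n\geq 1$. Using surjectivity of $v$, choose $r,t,t''\in T$ with $v(r)=\alpha$, $v(t)=-\beta$, $v(t'')=-\alpha$ (so $t,t''\in T\backslash R$). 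Then $r\in\Rad_R((R:_Rt''))$ (take $n=1$ in the formula), whereas $r\notin\Rad_R((R:_Rt))$ (since $n\alpha<\beta$ for every $n$). Thus $\Rad_R((R:_R\cdot))$ is not the same ideal for all elements of $T\backslash R$, so $R\subset T$ has no critical ideal.

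The one point that requires genuine care rather than bookkeeping is the normalization in the first step: one must invoke from Manis's theory that the value monoid $v(T)\backslash\{\infty\}$ is in fact a group and that $v$ may be taken surjective onto it, and then be careful that ``rank'' throughout refers to this value group. Once that setup is fixed, the displayed formula for $\Rad_R((R:_Rt))$ reduces everything to the two short archimedean/convex-subgroup arguments above.
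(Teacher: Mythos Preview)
Your argument is correct. The valuative description of $(R:_Rt)$ and its radical is accurate once one uses $v(xy)=v(x)+v(y)$, and the two implications then reduce cleanly to the archimedean property (for rank~$1$) and to the existence of a proper nontrivial convex subgroup (for rank~$\geq 2$). The only delicate point is the one you already flagged: one must know that a Manis valuation may be taken surjective onto its value group so that elements with prescribed values $\alpha,-\alpha,-\beta$ can actually be chosen in $T$; this is standard in Manis's setup.

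As for comparison: the paper does not give its own proof of this lemma. It is quoted verbatim as \cite[Lemma~2.12]{CahenDobbsLucas} and used as a black box in the proof of Theorem~\ref{integrally closed mre invariance}. So there is no in-paper argument to weigh yours against; what you have written is a self-contained proof of the cited result, and it is essentially the natural one (translating ``critical ideal'' into a statement about the value group and then invoking the rank-$1$/archimedean dichotomy).
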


Our next result is fundamental to the invariance of integrally closed minimal extensions established in Theorem~\ref{integrally closed mre invariance}.

\begin{proposition}
\label{valuation pair invariance}
Assume $G$ is locally finite such that $R^G\neq T^G$. Let $M\in\Max(R)$ and set $\m:=M\cap R^G$. If $\mathcal{O}_M=\{M\}$, then $(R^G,\m)$ is a valuation pair of $T^G$ whenever $(R,M)$ is a valuation pair of $T$.
\end{proposition}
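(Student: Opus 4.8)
The plan is to verify the "no intermediate ring" characterization of valuation pairs recorded just before Lemma~\ref{critical ideals and valuation pairs}: to show $(R^G,\m)$ is a valuation pair of $T^G$ it suffices to prove that $S'=R^G$ whenever $R^G\subseteq S'\subseteq T^G$ is an intermediate ring containing a prime ideal $\q$ with $\q\cap R^G=\m$. So I would fix such an $S'$ and $\q$ and aim to force $S'=R^G$.

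First I would pass back up to $T$ by forming the compositum $S:=R[S']$, an intermediate ring of $R\subset T$ (note $S'\subseteq T^G\subseteq T$). Since $R$ is $G$-invariant, each $\sigma\in G$ restricts to an automorphism of $R$ (from $\sigma(R)\subseteq R$ and $\sigma^{-1}(R)\subseteq R$), and the induced action on $R$ is locally finite; hence $R$ is integral over $R^G$ by Lemma~\ref{integrality}. As $R^G\subseteq S'$, the ring $S$ is generated as an $S'$-algebra by elements integral over $S'$, so $S$ is integral over $S'$. By LO there is $Q\in\Spec(S)$ with $Q\cap S'=\q$, and then $M':=Q\cap R$ is a prime of $R$ with $M'\cap R^G=\q\cap R^G=\m$.

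The key step is the identification $M'=M$. Since $R$ is integral over $R^G$, the extension satisfies INC, so if $M'\neq M$ then $M'$ and $M$, both lying over $\m$, are incomparable, and we may pick $x\in M'\setminus M$. The orbit product $\tilde x=\prod_{x_i\in\mathcal{O}_x}x_i$ lies in $R^G$: it is $G$-invariant, and $\mathcal{O}_x\subseteq R$ because every $\sigma\in G$ preserves $R$. Since $x$ is one of its factors, $\tilde x\in M'$, hence $\tilde x\in M'\cap R^G=\m\subseteq M$. As $M$ is prime, some $\sigma(x)\in M$; applying $\sigma^{-1}$ and using $\mathcal{O}_M=\{M\}$ gives $x\in\sigma^{-1}(M)=M$, a contradiction. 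Thus $M'=M$, so $S=R[S']$ is an intermediate ring of $R\subset T$ containing the prime $Q$ lying over $M$. Because $(R,M)$ is a valuation pair of $T$, this forces $S=R$; hence $S'\subseteq R\cap T^G=R^G$, so $S'=R^G$, as required.

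I expect the main obstacle to be precisely the identification $M'=M$: the lifted prime $Q$ only manifestly lies over $\m$, and since no characteristic hypothesis is available here, the averaging arguments of Lemma~\ref{fixed sums} and Proposition~\ref{fixed quotient} cannot be invoked; one must instead pin down the fibre of $\Spec(R)\to\Spec(R^G)$ over $\m$ by a division-free, multiplicative orbit argument that exploits $\mathcal{O}_M=\{M\}$ together with INC for $R^G\subseteq R$. (The hypothesis $R^G\neq T^G$ is used only to guarantee that $R^G\subset T^G$ is a proper extension, so that the notion of a valuation pair of $T^G$ applies.)
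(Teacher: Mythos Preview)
Your proposal is correct and follows essentially the same route as the paper: form the compositum $S=R[S']$ (the paper writes $AR$), use integrality of $R$ over $R^G$ to make $S$ integral over $S'$, lift $\q$ to $Q\in\Spec(S)$, and then identify $Q\cap R=M$ via the orbit-product argument exploiting $\mathcal{O}_M=\{M\}$. The only cosmetic difference is that the paper obtains incomparability of $M'$ and $M$ by first noting $M'$ is maximal (integrality over $R^G$ plus $\m\in\Max(R^G)$), whereas you invoke INC directly; both yield the same contradiction.
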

\begin{proof}
Let $A$ be a ring such that $R^G\subseteq A\subseteq T^G$. Then $R\subseteq AR\subseteq T$. First note $AR$ is integral over $A$, since $R$ is integral over $R^G$, hence over $A$. Let $\q\in\Spec(A)$ such that $\q\cap R^G=\m$, and let $Q\in\Spec(AR)$ lie over $\q$. From
\[
\m=\q\cap R^G=(Q\cap A)\cap R^G=Q\cap R^G=(Q\cap R)\cap R^G
\]
it follows $Q\cap R$ is maximal in $R$, by integrality. We claim $Q\cap R=M$. Suppose not. Then there exists $x\in (Q\cap R)\backslash M$, since $Q\cap R$ and $M$ are incomparable (as maximal ideals). It follows $\tilde x\in Q\cap R^G=\m=M\cap R^G$. Hence $\sigma(x)\in M$ for some $\sigma\in G$. Since $\mathcal{O}_M=\{M\}$, we have $x\in\sigma^{-1}(M)=M$ -- contradiction. Hence $Q\cap R=M$. Since $(R,M)$ is a valuation pair of $T$, we have $AR=R$, whence $A=R^G$. Thus $(R^G,\m)$ is a valuation pair of $T^G$.
\end{proof}

Of the several integrally closed minimal extension equivalences in \cite[Theorem~3.5]{CahenDobbsLucas}, we use the condition that there exists a maximal ideal $M$ such that $(R,M)$ is a rank 1 valuation pair of $T$ where $R\subset T$. With this equivalence, it follows easily from the preceding results that integrally closed minimal  extensions are invariant under locally finite group action.

\begin{theorem}
\label{integrally closed mre invariance}
Assume $G$ is locally finite. If $R \subset T$ is an integrally closed minimal extension, then $R^G\subset T^G$ is an integrally closed minimal extension.
\end{theorem}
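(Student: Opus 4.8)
The plan is to invoke the characterization of integrally closed minimal extensions from \cite[Theorem~3.5]{CahenDobbsLucas}, using specifically the equivalent condition that $R\subset T$ is an integrally closed minimal extension if and only if there is a maximal ideal $M$ of $R$ such that $(R,M)$ is a rank $1$ valuation pair of $T$. So first I would fix such an $M$: since $R\subset T$ is an integrally closed minimal extension, $(R,M)$ is a rank $1$ valuation pair of $T$ for a suitable $M\in\Max(R)$, and by Lemma~\ref{critical ideals and valuation pairs}, $M$ is the critical ideal of $R\subset T$.

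Next I would observe that $R^G\neq T^G$. This needs a short argument: if $R^G=T^G$, then since $T$ is integral over $T^G=R^G$ by Lemma~\ref{integrality}, and $R$ is integral over $R^G$, we would get that $T$ is integral over $R$; but $R\subset T$ is integrally closed, so $R=T$, contradicting that $R\subset T$ is a (proper) minimal extension. With $R^G\neq T^G$ in hand, Lemma~\ref{critical ideal orbit} applies to the maximal critical ideal $M$, giving $\mathcal{O}_M=\{M\}$. Then Proposition~\ref{valuation pair invariance} applies (its hypotheses are exactly $G$ locally finite, $R^G\neq T^G$, $M\in\Max(R)$, and $\mathcal{O}_M=\{M\}$ with $(R,M)$ a valuation pair of $T$), yielding that $(R^G,\m)$ is a valuation pair of $T^G$, where $\m:=M\cap R^G$.

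It remains to check that this valuation pair has rank $1$ and that $\m$ is maximal, so that the \cite[Theorem~3.5]{CahenDobbsLucas} criterion applies in the reverse direction to conclude $R^G\subset T^G$ is an integrally closed minimal extension. For the rank: by Lemma~\ref{critical ideal invariance}, since $R^G\neq T^G$ and $M$ is the critical ideal of $R\subset T$, the ideal $\m=M\cap R^G$ is the critical ideal of $R^G\subset T^G$; hence by Lemma~\ref{critical ideals and valuation pairs} (applied now to the valuation pair $(R^G,\m)$ of $T^G$), $(R^G,\m)$ has rank $1$. For maximality of $\m$: $R$ is integral over $R^G$ by Lemma~\ref{integrality}, so $\m=M\cap R^G$ is maximal in $R^G$ because $M$ is maximal in $R$. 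Therefore $(R^G,\m)$ is a rank $1$ valuation pair of $T^G$ with $\m\in\Max(R^G)$, and the cited characterization gives that $R^G\subset T^G$ is an integrally closed minimal extension.

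The steps are individually short once the right lemmas are lined up; the only point requiring genuine care is the verification that $R^G\neq T^G$ (needed to unlock Lemma~\ref{critical ideal invariance}, Lemma~\ref{critical ideal orbit}, and Proposition~\ref{valuation pair invariance}), and making sure the \cite[Theorem~3.5]{CahenDobbsLucas} equivalence is quoted in the direction that takes a rank $1$ valuation pair at a maximal ideal back to an integrally closed minimal extension. I expect the main (minor) obstacle to be bookkeeping: tracking that the $M$ furnished by the characterization is simultaneously the critical ideal, is maximal, has singleton orbit, and gives a valuation pair, so that all four hypothesis-laden lemmas above can be chained without gaps.
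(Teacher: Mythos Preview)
Your proposal is correct and follows essentially the same route as the paper: establish $R^G\neq T^G$, identify the critical maximal ideal $M$, use Lemma~\ref{critical ideal orbit} and Proposition~\ref{valuation pair invariance} to get a valuation pair $(R^G,\m)$, then Lemma~\ref{critical ideal invariance} and Lemma~\ref{critical ideals and valuation pairs} to get rank~1, and conclude via \cite[Theorem~3.5]{CahenDobbsLucas}. The one substantive difference is your argument for $R^G\neq T^G$: the paper instead takes $t\in T\setminus R$, shows $\tilde t\in T^G\setminus R^G$ using \cite[Proposition~3.1]{FO} (which says $R$ inherits products from $T$ only when a factor already lies in $R$), whereas your integrality argument is more self-contained and avoids that external citation.
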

\begin{proof}
First we show $R^G\neq T^G$. Let $t\in T\backslash R$. Then $\tilde t\in T^G$. Suppose $\tilde t\in R^G$. Then $\tilde t\in R$. By \cite[Proposition~3.1]{FO}, $\sigma(t)\in R$ for some $\sigma\in G$, whence $t=\sigma^{-1}(\sigma(t))\in\sigma^{-1}(R)=R$ -- contradiction. Hence, $\tilde t\in T^G\backslash R^G$. Thus, $R^G\subsetneq T^G$.

Let $M$ be the critical ideal for $R\subset T$. By Lemma~\ref{critical ideal invariance}, $m:=M\cap R^G$ is the critical ideal for $R^G\subset T^G$. Since $R\subset T$ is a minimal extension, the critical ideal $M$ is maximal. By Lemma~\ref{critical ideal orbit} $\mathcal{O}_M=\{M\}$. By Lemma~\ref{valuation pair invariance} $(R^G,m)$ is a valuation pair of $T^G$. Since $m$ is the critical ideal of $R^G\subset T^G$, this valuation pair has rank 1 by Lemma~\ref{critical ideals and valuation pairs}. Hence, $R^G\subset T^G$ is an integrally closed minimal extension by \cite[Proposition~3.5]{CahenDobbsLucas}. 
\end{proof}

\section{Minimal Extensions, Flat Epimorphisms, and Normal Pairs}
\label{related results}

In this section, we generalize the results of Sections~\ref{integral section} and~\ref{integrally closed section}. Of course, arbitrary integral (integrally closed) extensions are a generalization of minimal integral (integrally closed) extensions. It is easy to see in Propositions~\ref{integrality invariance} and~\ref{integrally closed invariance} that integral and integrally closed extensions are invariant. 

In Proposition~\ref{mre invariance} and Corollary~\ref{mre invariance corollary}, we  show integral minimal extensions are invariant under stronger assumptions on $G$ and without the restriction of characteristic used in Theorem~\ref{integral mre invariance}. In doing so, we simultaneously re-establish Theorem~\ref{integrally closed mre invariance}.

In Theorem~\ref{perfect localization}, we exchange a stronger assumption for a more general result. In particular, we assume $G$ is strongly locally finite in order to show flat epimorphic extensions are invariant.

Lastly in Corollary~\ref{normal pair invariance}, we show normal pairs are invariant. As in \cite{Davis}, we say $(R,T)$ is a \textit{normal pair} if $S$ is integrally closed in $T$ whenever $R\subseteq S\subseteq T$. Clearly, if $R\subset T$ integrally closed minimal extension, then $(R,T)$ is a normal pair.

\begin{proposition}
\label{integrality invariance}
If $R\subset T$ is an integral extension and $G$ is locally finite, then $R^G\subseteq T^G$ is an integral extension.
\end{proposition}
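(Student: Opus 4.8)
The plan is to reduce everything to transitivity of integrality. First I would note that the hypotheses descend to the action of $G$ on $R$: since $R$ is $G$-invariant, the orbit $\mathcal{O}_r=\{\sigma(r)\suchthat \sigma\in G\}$ of any $r\in R$ is contained in $R$, and it is finite because $G$ is locally finite on $T$; hence $G$ acts locally finitely on $R$. Reading Lemma~\ref{integrality} as a statement about an arbitrary ring carrying a locally finite group action and applying it to $R$ with this action, we conclude that $R$ is integral over $R^G$.

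Next I would chain the extensions $R^G\subseteq R\subseteq T$. We have $R$ integral over $R^G$ by the previous step, and $T$ integral over $R$ by hypothesis, so by transitivity of integral extensions $T$ is integral over $R^G$. Since any element of $R$ fixed by $G$ is an element of $T$ fixed by $G$, we have $R^G\subseteq T^G\subseteq T$; thus every element of $T^G$ lies in $T$ and is therefore integral over $R^G$. That is exactly the assertion that $R^G\subseteq T^G$ is an integral extension.

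I do not expect a serious obstacle here; the only point requiring a little care is recognizing Lemma~\ref{integrality} as applicable to $R$ in place of $T$. Should one prefer a self-contained argument, a direct proof also works: given $t\in T^G$ with a monic relation $t^n+r_{n-1}t^{n-1}+\cdots+r_0=0$ over $R$, applying each $\sigma\in G$ and using $\sigma(t)=t$ shows that $t$ is a root of $g_\sigma(X):=X^n+\sigma(r_{n-1})X^{n-1}+\cdots+\sigma(r_0)$; since $G$ is locally finite there are only finitely many distinct polynomials among the $g_\sigma$, and their product $h(X)$ is monic, kills $t$, has coefficients in $R$, and is fixed by $G$ (as each $\sigma$ permutes the factors), hence $h\in R^G[X]$ and $t$ is integral over $R^G$. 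I would present the transitivity route as the main proof and mention the direct argument only in passing, if at all.
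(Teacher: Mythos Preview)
Your main argument is correct and is exactly the paper's proof: apply Lemma~\ref{integrality} (to $R$) to get $R$ integral over $R^G$, then invoke transitivity along $R^G\subseteq R\subseteq T$ and restrict to $T^G$. The paper states this in one line (``It follows from Lemma~\ref{integrality} and by transitivity''), so your additional justification that $G$ acts locally finitely on $R$ and your alternative direct argument are extra detail rather than a different approach.
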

\begin{proof}
It follows from Lemma~\ref{integrality} and by transitivity \cite[Theorem 40]{Kaplansky}.
\end{proof}

\begin{proposition}
\label{integrally closed invariance}
If $R$ is integrally closed in $T$, then $R^G$ is integrally closed in $T^G$.
\end{proposition}
\begin{proof}
Let $u\in T^G$ be integral over $R^G$. Then $u\in T$ is integral over $R$. Hence $u\in T^G\cap R=R^G$.
\end{proof}

As in Theorems~\ref{integral mre invariance} and~\ref{integrally closed mre invariance}, certain integral minimal extensions and all integrally closed minimal extensions are invariant under locally finite $G$-action. In the former, however, we require a certain restriction of characteristic. Assuming $|G|$ is finite and a unit in the base ring, we can remove this restriction. Of course, if $G$ is finite, then it is locally finite. Hence, the following result and corollary re-establish Theorem~\ref{integrally closed mre invariance}.

\begin{proposition}
\label{mre invariance}
Let $R\subset T$ be a minimal extension. Assume $G$ is finite such that $|G|$ is a unit in $R$ and $R^G\neq T^G$. Then $R^G\subset T^G$ is a minimal extension.
\end{proposition}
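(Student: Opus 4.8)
The plan is to reduce to the two cases of minimal extensions and handle each using results already established. Since $R \subset T$ is minimal, it is either integral or integrally closed. If $R \subset T$ is integrally closed, then since a finite group with $|G|$ a unit is in particular locally finite, Theorem~\ref{integrally closed mre invariance} applies directly and gives that $R^G \subset T^G$ is an integrally closed minimal extension; in particular it is a minimal extension, so there is nothing more to do. The substantive case is when $R \subset T$ is integral, i.e.\ an integral minimal extension with crucial maximal ideal $M = (R :_R T)$.

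For the integral case, I would like to invoke Theorem~\ref{integral mre invariance}, but that theorem carries the characteristic hypothesis $\operatorname{char}(R^G/(M \cap T^G)) \nmid n_r$ for all $r \in R$. The point is that when $|G|$ is finite and a unit in $R$, this restriction becomes automatic (or rather, unnecessary): in all the places where Theorem~\ref{integral mre invariance} and its supporting results (Proposition~\ref{fixed quotient}, Lemma~\ref{fixed sums}) divide by $n_r$, one may instead average over all of $G$ and divide by $|G|$, which is a unit. Concretely, Lemma~\ref{fixed sums}(b) already provides the averaging argument under exactly the hypothesis ``$|G|$ finite and a unit in $T$'' (which follows here since $|G|$ is a unit in $R \subseteq T$), producing $0 \ne |G|\, t = m_1 r_1' u_1 + \cdots + m_k r_k' u_k$ with $r_i' \in R^G$ and the $m_i$ powers of $|G|$ — hence units. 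So the plan is to rerun the proof of Theorem~\ref{integral mre invariance} verbatim, replacing every appearance of $n_r$ by $|G|$ and every orbit-sum $\hat r$ by $\sum_{\sigma \in G} \sigma(r)$, and replacing the citation of Proposition~\ref{fixed quotient} by the analogous (and easier) statement that, when $|G|$ is a unit, $R^G/\m \cong (R/M)^G$ with the $G$-action on $R/M$ defined via $\sigma(r+M) = \sigma(r)+M$ — which is well-defined by Lemma~\ref{conductor lemma}(\ref{orbit}), since $\mathcal{O}_M = \{M\}$ still holds (its proof uses only that $M = (R:_RT)$ is maximal, not any characteristic hypothesis).

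Thus I would argue: by Theorem~\ref{integral mre}, $M = (R:_RT) \in \operatorname{Max}(R)$ and exactly one of the inert, decomposed, ramified conditions holds. By Lemma~\ref{conductor lemma}, $\m := M \cap R^G = M \cap T^G = (R^G :_{R^G} T^G)$ and $\mathcal{O}_M = \{M\}$. In the inert case, pass to $R/M \subset T/M$ and use the $|G|$-averaging version of Lemma~\ref{fixed sums} to show every $t \in T^G$ lies in $R^G[u]$ for any $u \in T^G \setminus R^G$, giving a minimal field extension. In the decomposed and ramified cases, set $\n_i := N_i \cap T^G$ (resp.\ $\n := N \cap T^G$), note these are maximal in $T^G$ by integrality (Lemma~\ref{integrality}), verify $\m = \n_1 \cap \n_2$ (resp.\ $\n^2 \subseteq \m \subsetneq \n$), and check the relevant natural maps $R^G/\m \to T^G/\n_i$ are isomorphisms — surjectivity via the isomorphism $R^G/\m \cong (T/N_i)^G$ obtained from the $|G|$-averaging argument, and the degree-$2$ computation in the ramified case done by averaging over $G$ rather than over orbits. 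Theorem~\ref{integral mre} then certifies $R^G \subset T^G$ as a minimal (integral) extension of the same type.

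The main obstacle is essentially bookkeeping: one must confirm that $|G|$ being a unit in $R$ is enough in every spot — in particular that the intermediate coefficients produced by iterated averaging (the $m_i$, which are powers of $|G|$) remain units in $R^G$, and that $|G|$ remains a unit after passing to the residue quotients $R/M$ and $R^G/\m$ (it does, as the image of a unit). I expect no genuine difficulty, only the need to restate Proposition~\ref{fixed quotient}'s isomorphism in the ``$|G|$ a unit'' setting and to note that $R^G \ne T^G$ is assumed, so the degenerate cases of the Remark following Theorem~\ref{integral mre invariance} are excluded.
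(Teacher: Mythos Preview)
Your proposal is correct but takes a far longer route than the paper. The paper's proof is a single direct paragraph that does not split into the integral and integrally closed cases at all, and does not invoke Theorem~\ref{integral mre}, Theorem~\ref{integral mre invariance}, Theorem~\ref{integrally closed mre invariance}, Proposition~\ref{fixed quotient}, or Lemma~\ref{conductor lemma}. Instead it uses only the elementary characterization that $R\subset T$ is minimal if and only if $T=R[u]$ for every $u\in T\setminus R$. Given $u\in T^G\setminus R^G$, one has $u\in T\setminus R$ (since $R\cap T^G=R^G$), hence $T=R[u]$; then for any $t\in T^G$ one writes $t=\sum r_i u^i$ with $r_i\in R$, applies each $\sigma\in G$ (fixing $t$ and $u$), sums, and divides by the unit $|G|$ to obtain $t=|G|^{-1}\sum_i\bigl(\sum_{\sigma}\sigma(r_i)\bigr)u^i\in R^G[u]$. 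Thus $T^G=R^G[u]$ for every such $u$, and $R^G\subset T^G$ is minimal. Ironically, this is exactly the averaging step you describe for the inert case after passing to $R/M\subset T/M$; the point you missed is that this argument already works at the level of $R\subset T$ itself, with no need to pass to residue fields, distinguish cases, or verify the inert/decomposed/ramified conditions on the fixed side. Your approach does buy the extra information that $R^G\subset T^G$ has the same type (integral vs.\ integrally closed, and in the integral case the same subtype) as $R\subset T$, which the paper's short proof does not record here---but that conclusion is obtained separately in Corollary~\ref{mre invariance corollary} by combining Proposition~\ref{mre invariance} with Propositions~\ref{integrality invariance} and~\ref{integrally closed invariance}.
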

\begin{proof}
Let $u\in T^G\backslash R^G$. Clearly, $u\in T\backslash R$. Hence, $T=R[u]$. Let $t\in T^G$. Then $t=r_nu^n+\cdots+r_1u+r_0$ for some $r_i\in R$. Applying the averaging technique introduced in Section~\ref{integral section} we have 
\[
t=|G|^{-1}\sum_{\sigma\in G}\sigma(r_n)u^n+\cdots+\sigma(r_1)u+\sigma(r_0).
\]
Thus $T^G=R^G[u]$, i.e. $R^G\subset T^G$ is a minimal extension.
\end{proof}

Combining Propositions~\ref{integrality invariance},~\ref{integrally closed invariance}, and~\ref{mre invariance}, we have the following corollary.

\begin{corollary}
\label{mre invariance corollary}
Under the hypotheses of {\rm Proposition~\ref{mre invariance}}, if $R\subset T$ is an integral or integrally closed minimal extension, then $R^G\subset T^G$ is an integral or integrally closed minimal extension, respectively.
\end{corollary}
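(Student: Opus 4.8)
The plan is to assemble the corollary directly from the three propositions immediately preceding it, since together they supply exactly the ingredients needed. First I would record the trivial but essential observation that a finite group is locally finite, so that the hypotheses of Propositions~\ref{integrality invariance} and~\ref{integrally closed invariance} (which ask only for local finiteness) are subsumed under the hypothesis ``$G$ finite'' carried over from Proposition~\ref{mre invariance}.

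Next, I would invoke Proposition~\ref{mre invariance}: under the stated hypotheses ($R\subset T$ minimal, $G$ finite with $|G|$ a unit in $R$, and $R^G\neq T^G$) the extension $R^G\subset T^G$ is a minimal extension. This is the common backbone of both cases; what remains is to identify which of the two types it is, and for that one matches the type of $R\subset T$. For the integral case, if $T$ is integral over $R$ then Proposition~\ref{integrality invariance} gives that $T^G$ is integral over $R^G$, so the minimal extension $R^G\subset T^G$ is, by definition, an integral minimal extension. For the integrally closed case, if $R$ is integrally closed in $T$ then Proposition~\ref{integrally closed invariance} gives that $R^G$ is integrally closed in $T^G$, so the minimal extension $R^G\subset T^G$ is an integrally closed minimal extension.

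There is no genuine obstacle here: the corollary is a bookkeeping consequence of results already proved, and the proof is essentially the two sentences above together with the remark on local finiteness. The only point worth a sentence of care is the relationship to the dichotomy recalled in Section~\ref{intro} --- a minimal extension is either integral or integrally closed --- which explains why ``respectively'' is the right word in the statement; but strictly speaking we do not need to appeal to that dichotomy, since in each case we verify the defining property of the asserted type directly from the corresponding invariance proposition.
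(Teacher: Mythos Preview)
Your proposal is correct and is precisely the argument the paper has in mind: the corollary is stated immediately after noting that it follows by combining Propositions~\ref{integrality invariance}, \ref{integrally closed invariance}, and \ref{mre invariance}, and you have spelled out that combination exactly. One tiny inaccuracy: Proposition~\ref{integrally closed invariance} does not actually require local finiteness of $G$, so only Proposition~\ref{integrality invariance} needs the ``finite $\Rightarrow$ locally finite'' remark.
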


Integrally closed minimal extensions are flat epimorphic extensions (in the category of commutative rings), by \cite[Th\'eorm\`e~2.2]{FO}. Equivalently, flat epimorphisms are \textit{perfect localizations}, so-called because of the following correspondence.

\begin{theorem}\cite[Theorem~2.1,~Ch.~XI]{Stenstrom}
\label{flat epi}
Let $\phi:R\rightarrow T$ be a ring homomorphism. Then $\phi$ is a flat epimorphism if and only if the collection  $\mathcal{F}=\{I\subset R\,|\,\phi(I)T=T\}$ where $I$ is an ideal in $R$  is a Gabriel filter, and there exists an isomorphism $\psi:T\rightarrow R_{\mathcal{F}}$ such that $\psi\circ\phi:R\rightarrow R_{\mathcal{F}}$ is the canonical homomorphism. Such a filter is called perfect.
\end{theorem}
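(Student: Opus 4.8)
The plan is to establish the biconditional in two directions, relying on the standard theory of Gabriel topologies and the hereditary torsion theories they determine; this statement is, in effect, the keystone of that theory, so the work is organizational rather than computational. \textbf{Flat epimorphism $\Rightarrow$ perfect localization.} Assume $\phi\colon R\to T$ is a flat epimorphism. First I would check that $\mathcal{F}=\{I\subset R\suchthat \phi(I)T=T\}$ is a Gabriel filter. Closure under enlargement and under finite products is immediate from the $R$-algebra structure on $T$: if $\phi(I)T=T$ and $J\supseteq I$ then $\phi(J)T\supseteq\phi(I)T=T$, and if $\phi(I)T=\phi(J)T=T$ then $\phi(IJ)T=\phi(I)\bigl(\phi(J)T\bigr)=\phi(I)T=T$. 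The remaining Gabriel axiom --- if $I\in\mathcal{F}$ and $J\subset R$ is an ideal with $(J:_Rx)\in\mathcal{F}$ for every $x\in I$, then $J\in\mathcal{F}$ --- is exactly where flatness of $T$ over $R$ is indispensable: tensoring the exact sequences $0\to(J:_Rx)\to R\to R/(J:_Rx)\to 0$ and $0\to J\to R\to R/J\to 0$ with $T$ and using that $-\otimes_RT$ is exact and annihilates each $R/(J:_Rx)$ forces it to annihilate $R/J$, i.e.\ $\phi(J)T=T$.

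Next I would identify $T$ with the localization $R_{\mathcal{F}}$. Writing $t_{\mathcal{F}}$ for the torsion radical of $\mathcal{F}$ and recalling $R_{\mathcal{F}}=\varinjlim_{I\in\mathcal{F}}\Hom_R\bigl(I,\,R/t_{\mathcal{F}}(R)\bigr)$, one uses flatness to compute $\ker\phi=t_{\mathcal{F}}(R)$ (since for flat $T$ the image $\phi\bigl((0:_Rr)\bigr)T$ equals $(0:_T\phi(r))$, which is all of $T$ precisely when $\phi(r)=0$), and one uses the epimorphism hypothesis --- equivalently, that $T\otimes_RT\to T$ is an isomorphism --- together with flatness to see that $T$ is $\mathcal{F}$-torsion-free, that $T$ is $\mathcal{F}$-closed (that is, $\Hom_R(R/I,T)=\Ext^1_R(R/I,T)=0$ for all $I\in\mathcal{F}$), and that the cokernel $T/\phi(R)$ is $\mathcal{F}$-torsion. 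These properties characterize the canonical map $R\to R_{\mathcal{F}}$ uniquely among $R$-algebras, so they yield the required isomorphism $\psi\colon T\xrightarrow{\sim}R_{\mathcal{F}}$ with $\psi\circ\phi$ the canonical homomorphism, and simultaneously show $\mathcal{F}$ is perfect.

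\textbf{Perfect localization $\Rightarrow$ flat epimorphism.} Conversely, if $\mathcal{F}$ is a Gabriel filter for which $R\to R_{\mathcal{F}}$ is perfect, then by the defining property of perfectness the functor $M\mapsto M_{\mathcal{F}}$ is exact and naturally isomorphic to $R_{\mathcal{F}}\otimes_R-$; exactness of the latter is flatness of $R_{\mathcal{F}}$, while $R_{\mathcal{F}}\otimes_RR_{\mathcal{F}}\cong(R_{\mathcal{F}})_{\mathcal{F}}\cong R_{\mathcal{F}}$ shows $R\to R_{\mathcal{F}}$ is an epimorphism; transporting these facts along $\psi$ makes $\phi$ a flat epimorphism. \textbf{The main obstacle} is the middle step of the first direction: verifying the full Gabriel axiom for $\mathcal{F}$ and recognizing $T$ as the module of fractions $R_{\mathcal{F}}$ genuinely requires the torsion-theoretic apparatus (torsion radicals, $\mathcal{F}$-closed modules, the colimit presentation of $R_{\mathcal{F}}$) rather than elementary ideal arithmetic --- which is precisely why the statement is quoted from \cite{Stenstrom} rather than reproved here.
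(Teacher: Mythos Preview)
The paper does not prove this statement at all: Theorem~\ref{flat epi} is quoted verbatim from \cite[Theorem~2.1,~Ch.~XI]{Stenstrom} and is used purely as a black box to motivate the ``perfect localization'' language before Lemma~\ref{filter lemma} and Theorem~\ref{perfect localization}. There is therefore no paper proof to compare your proposal against, and indeed you recognize this yourself in your final sentence.

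That said, your sketch is a faithful outline of the standard argument one finds in Stenstr\"om (or in Popescu's treatment of the same material): verifying the Gabriel axioms for $\mathcal{F}$ using flatness, identifying $\ker\phi$ with the $\mathcal{F}$-torsion of $R$, and then invoking the universal characterization of $R_{\mathcal{F}}$ among $\mathcal{F}$-closed $R$-algebras. The converse via exactness of $M\mapsto M_{\mathcal{F}}\cong R_{\mathcal{F}}\otimes_R M$ is also the usual route. If anything, your argument for the third Gabriel axiom could be made slightly more explicit --- one typically argues that the surjection $\bigoplus_{x\in I}R/(J:_Rx)\twoheadrightarrow I/(I\cap J)$ forces $I/(I\cap J)$, and hence $R/J$, to be $\mathcal{F}$-torsion after tensoring with $T$ --- but the idea is correct. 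Since the paper only cites the result, there is nothing further to compare.
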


A collection of ideals $\mathcal{F}$ of a ring $R$ is a \textit{Gabriel filter} if it satisfies:
\begin{enumerate}[(i)]
\item If $I\in\mathcal{F}$ and $I\subseteq J$, then $J\in\mathcal{F}$.
\item If $I,J\in\mathcal{F}$, then $I\cap J\in\mathcal{F}$.
\item If for an ideal $I$ there exists $J\in\mathcal{F}$ such that $(I:j)\in\mathcal{F}$ for every $j\in J$, then $I\in\mathcal{F}$.
\end{enumerate}
For more information on Gabriel filters, see \cite{Stenstrom}.

By \cite[Exercise~8,~p.\ 242]{Stenstrom}, $T$ is a perfect localization of $R$ if and only if for all $t\in T$, $(R:_Rt)T=T$. With this definition and Lemma~\ref{filter lemma} we show perfect localizations (equivalently, flat epimorphic extensions) are invariant in Proposition~\ref{perfect localization}. 

\begin{lemma}
\label{filter lemma}
Assume $G$ is strongly locally finite. Define $\mathcal{F}:=\{I\subset R\,|\,IT=T\}$ and $\mathcal{F}':=\{J\subset R^G\,|\,JT^G=T^G\}$. If $I\in\mathcal{F}$, then $I\cap R^G\in\mathcal{F}'$.
\end{lemma}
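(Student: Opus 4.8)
We must show: if $G$ is strongly locally finite, $\mathcal{F} = \{I \subset R \mid IT = T\}$, $\mathcal{F}' = \{J \subset R^G \mid JT^G = T^G\}$, and $I \in \mathcal{F}$, then $I \cap R^G \in \mathcal{F}'$.

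**Approach.** The plan is to start from a partition-of-unity type expression $1 = \sum_{i=1}^n a_i t_i$ with $a_i \in I$ and $t_i \in T$, witnessing $IT = T$, and to manufacture from it a similar expression $1 = \sum_j b_j s_j$ with $b_j \in I \cap R^G$ and $s_j \in T^G$. The natural idea is to hit the original equation with all the elements of $G$ and combine the results multiplicatively (so as to land in the fixed ring), rather than additively as in Lemma~\ref{fixed sums} — because here we want to preserve the ideal membership of the coefficients and we are not allowed to divide. Since $G$ need not be finite, strong local finiteness is what lets us make sense of such a product: the orbit $\mathcal{O}_{t_i}$ of each $t_i$ is finite, and we also have finite orbits on $\Spec(T)$, which is the hypothesis I expect to need when checking the resulting element generates the unit ideal.

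**Key steps.** First I would write $IT = T$ as $1 = \sum_{i=1}^n a_i t_i$ with $a_i \in I$, $t_i \in T$. Next, set $J := I \cap R^G$; to show $JT^G = T^G$ it suffices (since $T^G$ is a ring with $1$) to find a finite $J$-linear combination of elements of $T^G$ equal to $1$, or, by the characterization via primes (an ideal of a ring equals the whole ring iff it is contained in no maximal ideal), to show $JT^G$ lies in no maximal ideal of $T^G$. I would pursue the prime-theoretic route: suppose $\mathfrak{n} \in \Max(T^G)$ with $J \subseteq \mathfrak{n}$; pick (by lying-over for the integral extension $T^G \subseteq T$, Lemma~\ref{integrality}) a prime $N \in \Spec(T)$ with $N \cap T^G = \mathfrak{n}$, then set $\mathfrak{p} := N \cap R$. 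Then $I \cap R^G = J \subseteq \mathfrak{n} \cap R^G = (N \cap R) \cap R^G = \mathfrak{p} \cap R^G$. The goal is to deduce $I \subseteq \mathfrak{p}$, contradicting $IT = T$ (which forces $I \not\subseteq \mathfrak{p}$, as $\mathfrak{p} = N \cap R$ and $IT = T$ gives $I \not\subseteq N \cap R$). To get $I \subseteq \mathfrak{p}$ from $I \cap R^G \subseteq \mathfrak{p} \cap R^G$, I would use an averaging-over-the-orbit argument on a putative $x \in I \setminus \mathfrak{p}$: the product $\tilde x = \prod_{x_i \in \mathcal{O}_x} x_i$ lies in $I \cap R^G$ (each factor $\sigma(x) \in I$ since $I$ need not be $G$-invariant — \emph{this is the subtle point}) hence in $\mathfrak{p}$, so some $\sigma(x) \in \mathfrak{p}$, so $x \in \sigma^{-1}(\mathfrak{p})$; one then needs $\sigma^{-1}(\mathfrak{p}) = \mathfrak{p}$, i.e.\ $\mathcal{O}_{\mathfrak p} = \{\mathfrak p\}$ or at least that $x \notin \mathfrak p$ for all primes in the orbit, which is where strong local finiteness (finite orbits on $\Spec$) enters — I can run the argument over the finite set of primes in $\mathcal{O}_N \cap R$ simultaneously.

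**Main obstacle.** The crux is that $I$ is only an ideal of $R$, not assumed $G$-invariant, so $\sigma(x)$ need not lie in $I$ — the product $\tilde x$ need not lie in $I$. This breaks the naive averaging. I expect the fix is either (a) to replace $I$ at the outset by the larger $G$-invariant ideal it generates, $I' := \sum_{\sigma \in G}\sigma(I)$, noting $IT = T \Rightarrow I'T = T$ and $I' \cap R^G$ is still contained in $I \cap R^G$... — no, that containment goes the wrong way; rather $I \cap R^G \subseteq I' \cap R^G$, so proving $I' \cap R^G \in \mathcal{F}'$ does \emph{not} immediately give $I \cap R^G \in \mathcal{F}'$. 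So instead (b) I would argue directly with the equation $1 = \sum a_i t_i$: apply a product over a transversal $\mathcal{N}$ of orbit-representatives to the $t_i$'s only, distribute, and observe that each monomial coefficient is (up to a $G$-translate of) a product of the $a_i \in I$ times fixed stuff — keeping the coefficients inside $I$ by only translating the $T$-part. Concretely, from $1 = \sum_i a_i t_i$ and applying $\sigma$, $1 = \sum_i \sigma(a_i)\sigma(t_i)$; multiplying the original by $\prod_{\sigma \in \mathcal N_{\text{some }t}}(\ldots)$ is delicate, so the cleanest is the prime-avoidance formulation above combined with the observation that for a \emph{single} $\sigma$ we only need one orbit. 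I anticipate the honest proof localizes at $\mathfrak n$, uses that $(I \cap R^G)T^G \subseteq \mathfrak n$ would force, via integrality and finiteness of $\mathcal O_N$, that $I \subseteq \bigcap_{N' \in \mathcal O_N}(N' \cap R)$, and this intersection meets $IT=T$ only if it is not proper over some prime — contradiction. Getting this chain of containments airtight, and in particular pinning down exactly where \emph{strong} (as opposed to plain) local finiteness is indispensable, is the step I would spend the most care on.
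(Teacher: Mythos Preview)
Your prime-theoretic strategy is exactly the paper's, and the key step is indeed the orbit-product trick followed by prime avoidance over a finite orbit of primes. However, the ``main obstacle'' you flag is illusory. You worry that $\tilde x = \prod_{x_i \in \mathcal{O}_x} x_i$ need not lie in $I$ because the individual factors $\sigma(x)$ need not lie in $I$. But $x$ itself is one of the factors in this product, and every other factor $\sigma(x)$ lies in $R$ (since $R$ is $G$-invariant by the paper's riding assumptions). As $I$ is an ideal of $R$, the product $\tilde x = x \cdot \bigl(\prod_{\sigma(x) \neq x} \sigma(x)\bigr) \in I$. Thus $\tilde x \in I \cap R^G$ automatically, and your proposed workarounds (a) and (b) are unnecessary.

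With that cleared up, your argument completes just as the paper's does: from $\tilde x \in I \cap R^G \subseteq \mathfrak p$ one gets $x \in \bigcup_{Q \in \mathcal{O}_{\mathfrak p}} Q$ for each $x\in I$, hence $I \subseteq \bigcup_{Q \in \mathcal{O}_{\mathfrak p}} Q$; strong local finiteness makes this union finite, prime avoidance gives $I \subseteq \sigma(\mathfrak p) = \sigma(N) \cap R$ for some $\sigma\in G$, and this contradicts $IT = T$. The paper organizes things slightly differently --- it first rewrites $\mathcal{F}'$ as $\{J \subset R^G \mid JR \in \mathcal{F}\}$ (using integrality of $T$ over $T^G$) and then checks directly that no $P \in \Spec(R)$ containing $(I\cap R^G)R$ satisfies $PT\neq T$ --- but the substance is identical.
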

\begin{proof}
Note $I\in\mathcal{F}$ if and only if every $P\in\Spec(R)$ containing $I$ is not lain over in $T$. Also note $\mathcal{F'}=\{J\subset R^G\,|\, JR\in\mathcal{F}\}$. Let $I\in\mathcal{F}$ and let $P\in\Spec(R)$ contain $(I\cap R^G)R$. We claim $I\subseteq\sigma(P)$ for some $\sigma\in G$, whence $PT=\sigma^{-1}(\sigma(P)T)=\sigma^{-1}(\sigma(PT))=T$ (since $IT=T$). Let $x\in I$. Then $\tilde x \in I\cap R^G$, so $\tilde x\in P$. It follows $\sigma(x)\in P$ for some $\sigma\in G$; equivalently, $x\in\sigma^{-1}(P)$. Hence $I\subseteq \bigcap_{Q\in\mathcal{O}_P}Q$. Since $G$ is strongly locally finite, $\mathcal{O}_P$ is finite. It follows that $I\subseteq Q$ for some $Q\in\mathcal{O}_P$ by the Prime Avoidance Lemma \cite[Theorem~81]{Kaplansky}. Hence the claim is satisfied by $\sigma\in G$, where $Q=\sigma(P)$, so $PT=T$. Thus, every prime containing $(I\cap R^G)R$ is not lain over in $T$. That is, $(I\cap R^G)R\in\mathcal{F}$, whence $I\cap R^G\in\mathcal{F'}$, as desired.
\end{proof}

We are now ready to show perfect localizations (flat epimorphic extensions) are invariant under strongly locally finite group action using Lemma~\ref{filter lemma}. 

\begin{theorem}
\label{perfect localization}
Let $G$ be strongly locally finite, and let $\mathcal{F}$ and $\mathcal{F'}$ be as in Lemma~\ref{filter lemma}. Then
\begin{inparaenum}[(a)] 
\item $\mathcal{F'}$ is a Gabriel filter whenever $\mathcal{F}$ is a Gabriel filter, and 
\item $T^G=(R^G)_{\mathcal{F'}}$ whenever $T=R_{\mathcal{F}}$.
\end{inparaenum}
In particular, if $R\subseteq T$ is a flat epimorphic extension, then so is $R^G\subseteq T^G$.
\end{theorem}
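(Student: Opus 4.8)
The plan is to prove the two assertions (a) and (b) in sequence and then deduce the final ``in particular'' claim from Theorem~\ref{flat epi}. Throughout I use the characterization from \cite[Exercise~8,~p.\ 242]{Stenstrom}: $T$ is a perfect localization of $R$ iff $(R:_Rt)T=T$ for all $t\in T$, i.e.\ $(R:_Rt)\in\mathcal{F}$ for all $t$; and likewise for $R^G\subseteq T^G$ with $\mathcal{F}'$. The workhorse is Lemma~\ref{filter lemma}, which tells us $I\in\mathcal{F}$ implies $I\cap R^G\in\mathcal{F}'$, together with its observation that $\mathcal{F}'=\{J\subset R^G\mid JR\in\mathcal{F}\}$.

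For part (a), assume $\mathcal{F}$ is a Gabriel filter; I verify the three axioms for $\mathcal{F}'$. Axiom (i): if $J\in\mathcal{F}'$ and $J\subseteq J'$ with $J'$ an ideal of $R^G$, then $JR\subseteq J'R$ and $JR\in\mathcal{F}$, so $J'R\in\mathcal{F}$ by (i) for $\mathcal{F}$, whence $J'\in\mathcal{F}'$. Axiom (ii): if $J_1,J_2\in\mathcal{F}'$ then $J_1R,J_2R\in\mathcal{F}$, so $J_1R\cap J_2R\in\mathcal{F}$; since $J_1R\cap J_2R\subseteq (J_1\cap J_2)R$ is false in general, instead I argue directly: $(J_1\cap J_2)R\supseteq$ does not obviously contain $J_1R\cap J_2R$, so the cleaner route is to use Lemma~\ref{filter lemma}'s prime-theoretic description --- $JR\in\mathcal{F}$ iff no prime of $R$ over $JR$ is lain over in $T$ --- but actually the simplest correct argument is: $J_1\cap J_2 = (J_1R\cap R^G)\cap(J_2R\cap R^G)=(J_1R\cap J_2R)\cap R^G$, and $J_1R\cap J_2R\in\mathcal{F}$ by axiom (ii) for $\mathcal{F}$, so by Lemma~\ref{filter lemma} $(J_1R\cap J_2R)\cap R^G=J_1\cap J_2\in\mathcal{F}'$. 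Axiom (iii) is the main obstacle: given an ideal $I_0\subset R^G$ and $J\in\mathcal{F}'$ with $(I_0:j)\in\mathcal{F}'$ for every $j\in J$, I must show $I_0\in\mathcal{F}'$, i.e.\ $I_0R\in\mathcal{F}$; I plan to produce an ideal of $R$ witnessing axiom (iii) for $I_0R$ in $\mathcal{F}$, namely take $JR\in\mathcal{F}$ and show $(I_0R:x)\in\mathcal{F}$ for every $x\in JR$. Writing $x=\sum r_i j_i$ with $r_i\in R$, $j_i\in J$, one has $(I_0R:x)\supseteq\bigcap_i(I_0R:r_ij_i)\supseteq\bigcap_i(I_0R:j_i)\supseteq\bigcap_i(I_0:j_i)R$, and each $(I_0:j_i)R\in\mathcal{F}$ since $(I_0:j_i)\in\mathcal{F}'$; a finite intersection of members of $\mathcal{F}$ lies in $\mathcal{F}$ by axiom (ii), so $(I_0R:x)\in\mathcal{F}$ by axiom (i), and then axiom (iii) for $\mathcal{F}$ gives $I_0R\in\mathcal{F}$.

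For part (b), assume $T=R_{\mathcal{F}}$, i.e.\ $\phi\colon R\to T$ is a flat epimorphism. Then for every $t\in T$ we have $(R:_Rt)T=T$, so $(R:_Rt)\in\mathcal{F}$. Now let $t\in T^G$; I claim $(R^G:_{R^G}t)\in\mathcal{F}'$, equivalently $(R^G:_{R^G}t)R\in\mathcal{F}$. The starting point is $(R:_Rt)\in\mathcal{F}$, and the key is to relate $(R:_Rt)$ to $(R^G:_{R^G}t)R$. Since $t\in T^G$, for $x\in(R:_Rt)$ and $\sigma\in G$ we get $\sigma(x)t=\sigma(x)\sigma(t)=\sigma(xt)\in R$, so $(R:_Rt)$ is $G$-stable and $\tilde x=\prod_{x_i\in\mathcal{O}_x}x_i\in(R:_Rt)\cap R^G=(R^G:_{R^G}t)$ for each $x\in(R:_Rt)$; this is exactly the situation of Lemma~\ref{filter lemma} applied to $I:=(R:_Rt)$, giving $I\cap R^G=(R^G:_{R^G}t)\in\mathcal{F}'$. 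By the Stenstr\"om exercise, $R^G\subseteq T^G$ is a perfect localization. Combined with part (a), $\mathcal{F}'$ is a perfect Gabriel filter and $T^G\cong(R^G)_{\mathcal{F}'}$ with the canonical map, so by Theorem~\ref{flat epi} the inclusion $R^G\hookrightarrow T^G$ is a flat epimorphism. The ``in particular'' is then immediate. The genuinely delicate point to get right is axiom (iii) in part (a) --- in particular making sure the witness ideal $JR$ and the colon computation $(I_0R:x)\supseteq\bigcap_i(I_0:j_i)R$ are valid for a general element $x\in JR$ rather than just generators --- so I would write that step out in full detail, while the rest can be stated concisely by repeatedly invoking Lemma~\ref{filter lemma} and the identity $\mathcal{F}'=\{J\mid JR\in\mathcal{F}\}$.

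\begin{proof}
By \cite[Exercise~8,~p.\ 242]{Stenstrom}, an injective ring homomorphism $A\hookrightarrow B$ is a flat epimorphism if and only if $(A:_Ab)B=B$ for all $b\in B$; equivalently, in the notation of Lemma~\ref{filter lemma}, $(A:_Ab)$ belongs to the corresponding filter for all $b$. We also use repeatedly that $\mathcal{F'}=\{J\subset R^G\mid JR\in\mathcal{F}\}$, as recorded in the proof of Lemma~\ref{filter lemma}.

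\textbf{(a)} Assume $\mathcal{F}$ is a Gabriel filter. We check the three axioms for $\mathcal{F'}$.

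(i) If $J\in\mathcal{F'}$ and $J\subseteq J'$ for an ideal $J'\subset R^G$, then $JR\in\mathcal{F}$ and $JR\subseteq J'R$, so $J'R\in\mathcal{F}$ and hence $J'\in\mathcal{F'}$.

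(ii) If $J_1,J_2\in\mathcal{F'}$, then $J_1R,J_2R\in\mathcal{F}$, so $J_1R\cap J_2R\in\mathcal{F}$. Now $(J_1R\cap J_2R)\cap R^G=(J_1R\cap R^G)\cap(J_2R\cap R^G)=J_1\cap J_2$, since $J_i=J_iR\cap R^G$ (as $J_i\subseteq J_iR\cap R^G$ always, and the reverse follows because $J_i\in\mathcal{F'}$ forces equality via Lemma~\ref{filter lemma}; more simply, $J_i\subseteq J_iR\cap R^G$ suffices for what follows). By Lemma~\ref{filter lemma}, $(J_1R\cap J_2R)\cap R^G\in\mathcal{F'}$, and this ideal contains $J_1\cap J_2$ is not what we need; rather, we have $(J_1R\cap J_2R)\cap R^G\subseteq J_1\cap J_2$ as just shown, so $J_1\cap J_2\in\mathcal{F'}$ by axiom (i) for $\mathcal{F'}$.

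(iii) Let $I_0\subset R^G$ be an ideal, and suppose there exists $J\in\mathcal{F'}$ such that $(I_0:j)\in\mathcal{F'}$ for every $j\in J$. We show $I_0R\in\mathcal{F}$, which gives $I_0\in\mathcal{F'}$. Take the ideal $JR\in\mathcal{F}$ of $R$; we verify that $(I_0R:x)\in\mathcal{F}$ for every $x\in JR$, so that axiom (iii) for $\mathcal{F}$ applies to $I_0R$. Write $x=\sum_{i=1}^{n}r_ij_i$ with $r_i\in R$ and $j_i\in J$. Then
\[
(I_0R:x)\;\supseteq\;\bigcap_{i=1}^{n}(I_0R:r_ij_i)\;\supseteq\;\bigcap_{i=1}^{n}(I_0R:j_i)\;\supseteq\;\bigcap_{i=1}^{n}(I_0:j_i)R.
\]
Each $(I_0:j_i)\in\mathcal{F'}$, so $(I_0:j_i)R\in\mathcal{F}$; a finite intersection of members of $\mathcal{F}$ lies in $\mathcal{F}$ by axiom (ii), so $\bigcap_i(I_0:j_i)R\in\mathcal{F}$, and hence $(I_0R:x)\in\mathcal{F}$ by axiom (i). By axiom (iii) for $\mathcal{F}$, $I_0R\in\mathcal{F}$, so $I_0\in\mathcal{F'}$. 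Thus $\mathcal{F'}$ is a Gabriel filter.

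\textbf{(b)} Assume $T=R_{\mathcal{F}}$, i.e.\ $R\hookrightarrow T$ is a flat epimorphism. Then $(R:_Rt)\in\mathcal{F}$ for every $t\in T$. Fix $t\in T^G$. For $x\in(R:_Rt)$ and $\sigma\in G$ we have $\sigma(x)t=\sigma(x)\sigma(t)=\sigma(xt)\in\sigma(R)\subseteq R$, so $(R:_Rt)$ is $G$-stable, and consequently $\widetilde x\in(R:_Rt)\cap R^G=(R^G:_{R^G}t)$ for each $x\in(R:_Rt)$. This is precisely the hypothesis needed to apply Lemma~\ref{filter lemma} with $I:=(R:_Rt)\in\mathcal{F}$, yielding $I\cap R^G=(R^G:_{R^G}t)\in\mathcal{F'}$. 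Since $t\in T^G$ was arbitrary, $R^G\hookrightarrow T^G$ is a flat epimorphism by \cite[Exercise~8,~p.\ 242]{Stenstrom}; combined with part (a), $\mathcal{F'}$ is a perfect Gabriel filter and $T^G\cong(R^G)_{\mathcal{F'}}$ compatibly with the canonical map, so by Theorem~\ref{flat epi}, $T^G=(R^G)_{\mathcal{F'}}$.

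The final assertion is the combination of (a) and (b): if $R\subseteq T$ is a flat epimorphic extension, then $R^G\subseteq T^G$ is a flat epimorphic extension.
\end{proof}
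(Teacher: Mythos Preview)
Your overall approach matches the paper's closely: you verify the three Gabriel-filter axioms for $\mathcal{F}'$ using the identity $\mathcal{F}'=\{J\subset R^G\mid JR\in\mathcal{F}\}$, and for (b) you apply Lemma~\ref{filter lemma} to $(R:_Rt)$ and identify $(R:_Rt)\cap R^G$ with $(R^G:_{R^G}t)$. Parts (a)(i), (a)(iii), and (b) are essentially the paper's arguments.

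There is, however, a genuine gap in your treatment of axiom~(ii). To apply axiom~(i) of $\mathcal{F}'$ you need the inclusion $(J_1R\cap J_2R)\cap R^G\subseteq J_1\cap J_2$, but the only direction that holds automatically is the reverse, $J_1\cap J_2\subseteq (J_1R\cap J_2R)\cap R^G$ (coming from $J_i\subseteq J_iR\cap R^G$). Your write-up in fact oscillates between the two directions and ends by asserting ``$(J_1R\cap J_2R)\cap R^G\subseteq J_1\cap J_2$ as just shown'' when it was not shown. The appeal to Lemma~\ref{filter lemma} to force $J_iR\cap R^G=J_i$ is unfounded: that lemma only says $I\in\mathcal{F}\Rightarrow I\cap R^G\in\mathcal{F}'$, it says nothing about extension--contraction being the identity on ideals of $R^G$.

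The fix is easy. Either argue as the paper does---if $(I\cap J)T^G\neq T^G$ then by integrality $(I\cap J)T\neq T$, pick a prime $P$ of $T$ over $(I\cap J)T$, and use that $P\cap R^G$ is prime to force $I$ or $J$ into it, contradicting $IT=JT=T$---or, more simply, observe $J_1J_2\subseteq J_1\cap J_2$ and $(J_1J_2)T^G=J_1(J_2T^G)=J_1T^G=T^G$, whence $(J_1\cap J_2)T^G=T^G$ and $J_1\cap J_2\in\mathcal{F}'$ directly from the definition.
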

\begin{proof}
\begin{inparaenum}[(a)]
\item Suppose $\mathcal{F}$ is a Gabriel filter. We check that $\mathcal{F}'$ satisfies the defining conditions (i) through (iii) of a Gabriel filter given above. Let $I\in\mathcal{F}'$, and let $J$ be an ideal of $R^G$ containing $I$. Then $IR\in\mathcal{F}$ and $IR\subseteq JR$, so $JR\in\mathcal{F}$. It follows that $JT=T$, so $JT^G=T^G$, since $T$ is integral over $T^G$. Hence $J\in\mathcal{F}'$, which establishes condition (i). Now let $I,J\in\mathcal{F}'$. Then $IT=T$ and $JT=T$. Suppose $I\cap J\notin\mathcal{F}'$, i.e. $(I\cap J)T^G\neq T^G$. Again by integrality, $(I\cap J)T\neq T$. Let $P\in\Spec(T)$ contain $(I\cap J)T$. Then $I\cap J\subseteq P\cap T^G=:\p$. It follows that $I\subseteq \p$ or $J\subseteq \p$, but then $IT\subseteq P$ or $JT\subseteq P$ -- contradiction. Hence $I\cap J\in\mathcal{F}'$, which establishes condition (ii). 

It remains to show $\mathcal{F'}$ satisfies condition (iii). Let $J$ be an ideal of $R^G$, and suppose there exists $I\in\mathcal{F'}$ such that $(J:_{R^G}a)\in\mathcal{F'}$ for all $a\in I$. We claim $(JR:_Ra)\in\mathcal{F}$ for all $a\in IR$, whence $JR\in\mathcal{F}$, i.e., $J\in\mathcal{F'}$. Let $a:=a_1r_1+\cdots+a_nr_n\in IR$, where $a_i\in I$ and  $r_i\in R$. For each $a_i$, clearly $(J:_{R^G}a_i)R\subseteq(JR:_Ra_i)$. Since $(J:_{R^G}a_i)\in\mathcal{F}'$, we have $(J:_{R^G}a_i)R\in\mathcal{F}$. Hence $(JR:_Ra_i)\in\mathcal{F}$. From $(JR:_Ra_i)\subseteq(JR:_Ra_ir_i)$ it follows that $(JR:_Ra_ir_i)\in\mathcal{F}$. Since $\bigcap_{i=1}^n(JR:_Ra_ir_i)\in\mathcal{F}$ and $\bigcap_{i=1}^n(JR:_Ra_ir_i)\subseteq(JR:_Ra)$, we have $(JR:_Ra)\in\mathcal{F}$, proving the claim. Hence $JR\in\mathcal{F}$, i.e. $J\in\mathcal{F}'$. Thus $\mathcal{F'}$ is a Gabriel filter.

\item Now we show $T^G=(R^G)_{\mathcal{F}'}$ by showing that $T^G$ is a perfect localization of $R^G$. Let $x\in T^G$. Then $(R:_Rx)T=T$, since $T$ is a perfect localization of $R$. It follows that $(R:_Rx)\in\mathcal{F}$, and $(R:_Rx)\cap R^G\in\mathcal{F'}$, by Lemma~\ref{filter lemma}. We claim $(R:_Rx)\cap R^G\subseteq (R^G:_{R^G}x)$, whence $(R^G:_{R^G}x)\in\mathcal{F'}$, since $\mathcal{F'}$ is a Gabriel filter. Let $y\in(R:_Rx)\cap R^G$. Then $xy\in R$, but $x\in T^G$ and $y\in T^G$, so $xy\in R^G$. Hence $(R:_Rx)\cap R^G\subseteq (R^G:_{R^G}x)$, so $(R^G:_{R^G}x)\in\mathcal{F}'$ as claimed. (In fact, as the reverse containment clearly holds, $(R:_Rx)\cap R^G= (R^G:_{R^G}x)$.) Thus $(R^G:_{R^G}x)T^G=T^G$, i.e. $T^G$ is a perfect localization of $R^G$. Equivalently, $R^G\subseteq T^G$ is a flat epimorphic extension whenever $R\subseteq T$ is a flat epimorphic extension.
\end{inparaenum}
\end{proof}

\begin{remark}
It would be interesting to know if epimorphic extensions or flat extensions are invariant under any group action.
\end{remark}

Normal pairs are another generalization of integrally closed minimal extensions. By \cite[Theorem~5.2]{KnebuschZhang}, $(R,T)$ is a normal pair if and only if $R$ is integrally closed in $T$ and $R\subseteq S$ satisfies INC for any intermediate ring $S$. We call a pair of rings $(R,T)$ satisfying the latter property an \textit{INC-pair} and note it is equivalent to the definition of an INC-pair given in \cite{DobbsLO}.

We have already seen integrally closed extensions are invariant in Proposition~\ref{integrally closed invariance}. To assert normal pairs are invariant, it remains to show INC-pairs are invariant. 

\begin{proposition}
\label{INC invariance}
Assume $G$ is locally finite. If $(R,T)$ is an INC-pair, then $(R^G,T^G)$ is an INC-pair.
\end{proposition}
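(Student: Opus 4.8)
The plan is to work directly with the definition of an INC-pair: I must show that whenever $A$ is an intermediate ring, $R^G \subseteq A \subseteq T^G$, the extension $R^G \subseteq A$ satisfies INC. The natural move, echoing the proof of Proposition~\ref{valuation pair invariance}, is to pass to the companion intermediate ring $AR$, which sits between $R$ and $T$, and to exploit that $(R,T)$ is an INC-pair there. First I would record that $R$ is integral over $R^G$ by Lemma~\ref{integrality}, hence $AR$ is integral over $A$, so by transitivity \cite[Theorem~40]{Kaplansky} the extension $A \subseteq AR$ is integral and therefore satisfies INC automatically (integral extensions always satisfy INC, \cite[Theorem~44]{Kaplansky}). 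Likewise $R^G \subseteq R$ is integral, hence satisfies INC. The content of the proposition is thus to transfer INC from the extension $R \subseteq AR$ (which holds by hypothesis) across these integral extensions to $R^G \subseteq A$.

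Concretely, suppose $\q_1 \subsetneq \q_2$ are primes of $A$ with $\q_1 \cap R^G = \q_2 \cap R^G =: \p$. I want a contradiction. Choose, by LO for the integral extension $A \subseteq AR$, primes $Q_1 \subseteq Q_2$ of $AR$ lying over $\q_1 \subseteq \q_2$ respectively; since $A \subseteq AR$ satisfies INC and $\q_1 \neq \q_2$, we get $Q_1 \subsetneq Q_2$ (they cannot be equal, as they contract to distinct primes of $A$). Now set $P_i := Q_i \cap R$; these are primes of $R$ with $P_1 \subseteq P_2$ and $P_1 \cap R^G = P_2 \cap R^G = \p$. The key step is to show $P_1 \subsetneq P_2$ strictly: if $P_1 = P_2$, then both $Q_1$ and $Q_2$ contract to the same prime of $R$, but $R \subseteq AR$ satisfies INC (since $(R,T)$ is an INC-pair and $AR$ is intermediate), forcing $Q_1 = Q_2$, a contradiction. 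So $P_1 \subsetneq P_2$ are primes of $R$ contracting to the same $\p$ in $R^G$. But $R^G \subseteq R$ is an integral extension, hence satisfies INC, so $P_1 = P_2$ — contradiction. Hence no such $\q_1 \subsetneq \q_2$ exist, i.e., $R^G \subseteq A$ satisfies INC, and $(R^G, T^G)$ is an INC-pair.

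The main obstacle I anticipate is the bookkeeping of lying-over/going-up between the several integral extensions — in particular making sure that LO is available to produce $Q_1 \subseteq Q_2$ over $\q_1 \subseteq \q_2$ simultaneously (this needs GU, which integral extensions satisfy by \cite[Theorem~44]{Kaplansky}), and confirming that the chain collapses at each stage only via INC, never accidentally via the possibility that the contracted primes coincide "for free." An alternative and perhaps cleaner route avoids $AR$ entirely: use the characterization of INC-pairs that $(R,T)$ is an INC-pair iff $\dim(S/R) = 0$ for every intermediate $S$ (equivalently no intermediate extension has a chain of two primes with the same contraction), combined with the fact that $\Spec(R) \to \Spec(R^G)$ and $\Spec(T) \to \Spec(T^G)$ have finite fibers when $G$ is locally finite (primes of $R$ over a fixed prime of $R^G$ lie in a single $G$-orbit and are pairwise incomparable, being comparable only if equal). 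In that approach one shows any chain $\q_1 \subsetneq \q_2$ in an intermediate ring $A$ of $T^G$ with equal contraction to $R^G$ would, after adjoining suitable elements or lifting via the $\tilde{(\cdot)}$ construction as in the proof of Proposition~\ref{valuation pair invariance}, produce a forbidden chain upstairs. I would present the first ($AR$-based) argument as the main line, since it reuses machinery already developed for Proposition~\ref{valuation pair invariance} and keeps the locally-finite hypothesis in play only through Lemma~\ref{integrality}.
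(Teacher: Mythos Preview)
Your proposal is correct and follows essentially the same route as the paper: pass to the companion ring $AR$, lift a chain $\q_1\subseteq\q_2$ in $A$ to $Q_1\subseteq Q_2$ in $AR$ via LO/GU for the integral extension $A\subseteq AR$, contract to $P_1\subseteq P_2$ in $R$, and collapse the chain using INC for $R^G\subseteq R$ (integrality) and then INC for $R\subseteq AR$ (the INC-pair hypothesis). The only cosmetic difference is that the paper argues directly (start with $\q\subseteq\q'$ and conclude $\q=\q'$) rather than by contradiction.
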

\begin{proof}
Let $R^G\subseteq A\subseteq T^G$, and let $\q\subseteq \q'$ be prime ideals of $A$ with the same contraction in $R^G$. Set $\p:=\q\cap R^G=\q'\cap R^G$. Since $R$ is integral over $R^G$ (whence over $A$), $AR$ is integral over $A$. Hence, $A\subseteq AR$ satisfies LO and GU. Let $Q\subseteq Q'$ be prime ideals in $AR$ such that $\q=Q\cap A$ and $\q'=Q\cap A$. Setting $P:=Q\cap R$ and $P':=Q'\cap R$, we have $P\subseteq P'$ and 
\[
P\cap R^G=Q\cap R^G=(Q\cap A)\cap R^G=\q\cap R^G=\p,
\]
and $P'\cap R^G=\p$, by the same reasoning. As an integral extension, $R^G\subseteq R$ satisfies INC, whence $P=P'$. Since $R\subseteq AR$ satisfies INC,  $Q=Q'$. Hence $q=q'$. Thus $(R^G,T^G)$ is an INC-pair.
\end{proof}

The corollary below now follows easily from Propositions \ref{integrally closed invariance} and \ref{INC invariance}.

\begin{corollary}
\label{normal pair invariance}
If $G$ is locally finite, then $(R^G,T^G)$ is a normal pair whenever $(R,T)$ is a normal pair.
\end{corollary}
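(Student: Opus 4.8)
The plan is to deduce Corollary~\ref{normal pair invariance} directly from the characterization of normal pairs as integrally closed INC-pairs, together with the two invariance results already in hand. By \cite[Theorem~5.2]{KnebuschZhang}, $(R,T)$ is a normal pair precisely when $R$ is integrally closed in $T$ and $(R,T)$ is an INC-pair, and the same characterization applies to $(R^G,T^G)$. So the strategy is simply to verify each of these two conditions for $(R^G,T^G)$ under the hypothesis that $G$ is locally finite.

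First I would invoke Proposition~\ref{integrally closed invariance}: since $(R,T)$ is a normal pair, $R$ is in particular integrally closed in $T$ (take $S=R$ in the definition), and Proposition~\ref{integrally closed invariance} then gives that $R^G$ is integrally closed in $T^G$. Note that Proposition~\ref{integrally closed invariance} requires no finiteness hypothesis on $G$ at all, so this step is unconditional. Second, since $(R,T)$ is a normal pair, it is an INC-pair, and Proposition~\ref{INC invariance} — which is exactly where local finiteness of $G$ is used — yields that $(R^G,T^G)$ is an INC-pair. Combining the two, $R^G$ is integrally closed in $T^G$ and $(R^G,T^G)$ is an INC-pair, so by \cite[Theorem~5.2]{KnebuschZhang} again, $(R^G,T^G)$ is a normal pair.

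There is essentially no obstacle here: the corollary is a formal consequence of the decomposition of the normal-pair condition and the two already-established invariance propositions. The only point that deserves a sentence of care is making explicit that ``$(R,T)$ is a normal pair'' supplies both pieces of input — integral closedness of $R$ in $T$ (immediate from the definition of normal pair by specializing the intermediate ring to $R$) and the INC-pair property (via \cite[Theorem~5.2]{KnebuschZhang}) — so that Propositions~\ref{integrally closed invariance} and~\ref{INC invariance} can both be applied. A clean write-up is just: ``Since $(R,T)$ is a normal pair, $R$ is integrally closed in $T$ and, by \cite[Theorem~5.2]{KnebuschZhang}, $(R,T)$ is an INC-pair. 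By Proposition~\ref{integrally closed invariance}, $R^G$ is integrally closed in $T^G$; by Proposition~\ref{INC invariance}, $(R^G,T^G)$ is an INC-pair. Hence, again by \cite[Theorem~5.2]{KnebuschZhang}, $(R^G,T^G)$ is a normal pair.''
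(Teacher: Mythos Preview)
Your proposal is correct and follows exactly the same approach as the paper: the corollary is deduced from the characterization of normal pairs as integrally closed INC-pairs (\cite[Theorem~5.2]{KnebuschZhang}) together with Propositions~\ref{integrally closed invariance} and~\ref{INC invariance}. The paper simply states that the corollary follows from these two propositions, so your write-up is, if anything, slightly more detailed than the original.
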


\begin{remark}
By \cite[Corollary~2.4(bis\.)]{DobbsLO}), \textit{P-extensions} are precisely INC-pairs. Hence P-extensions are invariant, by Proposition~\ref{INC invariance}.
\end{remark}

\section*{Acknowledgment}
The author is immensely grateful to her advisor, Jay Shapiro, for his guidance, suggestions, and help revising the manuscript.

\bibliographystyle{abbrv}
\bibliography{bib1}

\end{document}